\theoremstyle{plain} \numberwithin{equation}{section}
\newtheorem{theo}{Theorem}[section]
\newtheorem{coro}[theo]{Corollary}
\newtheorem{prop}[theo]{Proposition}
\newtheorem{lemm}[theo]{Lemma}
\theoremstyle{definition}
\newtheorem*{defi}{Definition}
\newtheorem*{exam}{Example}
\newtheorem*{rema}{Remark}
\def\Z{\mathbb Z}
\def\C{\mathbb C}
\def\R{\mathbb R}
\def\Q{\mathbb Q}
\def\q{\pi}
\def\v{v}
\def\tv{\tilde\v}
\def\bv{\bar\v}
\def\hN{\hat N}
\def\m{\mu}
\def\d{d}
\def\bQ{\tilde{Q}}
\def\qn{q_n^-}
\def\Qn{Q_n}
\def\Xn{X_n}
\DeclareMathOperator{\rank}{rank}
\DeclareMathOperator{\Hom}{Hom}
\DeclareMathOperator{\Tor}{Tor}
\DeclareMathOperator{\Int}{Int}
\DeclareMathOperator{\coker}{coker}
\newcommand{\brown}{}
\newcommand{\red}{}
\newcommand{\blue}{}
\begin{document}
\title{\blue{Torsion} in the cohomology of torus orbifolds}
\author[H. Kuwata]{Hideya Kuwata}
\address{Department of Mathematics, Osaka City University, Sumiyoshi-ku, Osaka 558-8585, Japan.}
\email{hideya0813@gmail.com}

\author[M. Masuda]{Mikiya Masuda}
\address{Department of Mathematics, Osaka City University, Sumiyoshi-ku, Osaka 558-8585, Japan.}
\email{masuda@sci.osaka-cu.ac.jp}

\author[H. Zeng]{Haozhi Zeng}
\address{Department of Mathematics, Osaka City University, Sumiyoshi-ku, Osaka 558-8585, Japan.}
\address{Current address: School of Mathematical Sciences, Fudan University, Shanghai, 200433, P.R. China.}
\email{zenghaozhi@icloud.com}

\date{\today}
\thanks{The second author was partially supported by JSPS Grant-in-Aid for Scientific Research 25400095}
\subjclass[2000]{Primary 55N10, 57S15; Secondary 14M25}
\keywords{toric orbifold, cohomology, torsion.}

\begin{abstract}
We study \red{torsion} in the integral cohomology of a certain family of $2n$-dimensional orbifolds $X$ with actions of the $n$-dimensional compact torus. Compact simplicial toric varieties are in our family. For a prime number $p$, we find a necessary condition for the integral cohomology of $X$ to have no $p$-torsion. Then we prove that the necessary condition is sufficient in some cases. We also give an example of $X$ which shows that the necessary condition is not sufficient in general.
\end{abstract}
\maketitle 

\section*{Introduction}

A toric variety is a normal complex algebraic variety of complex dimension $n$ with an algebraic action of $(\C^*)^n$ having a dense orbit. A toric variety is not necessarily compact and may have singularity. The famous theorem of Danilov-Jurkiewicz gives an explicit description of the integral cohomology ring of a compact smooth toric variety in terms of the associated fan. It in particular says that the integral cohomology groups are torsion-free and concentrated in even degrees. 

The analogous result holds for a compact simplicial toric variety $X$ (simplicial means that $X$ is an orbifold) but with rational coefficients. 
S. Fischli and A. Jordan studied the integral cohomology groups $H^*(X)$
in their dissertations \cite{fisc92}, \cite{jord98} using spectral
sequences. Their results give an explicit computation of $H^k(X)$ and
$H^{2n-k}(X)$ for $k\le 3$ under some conditions. Based on their
results, M. Franz \brown{developed Maple package torhom} \cite{fran04} to compute those cohomology groups. One can see that $H^*(X)$ has torsion in general while it has no torsion when $X$ is a weighted projective space (\cite{kawa73}). Therefore we are naturally led to ask when $H^*(X)$ has torsion or no torsion. 

The orbit space $Q$ of a compact simplicial toric variety $X$ by the restricted action of the $n$-dimensional compact torus $T$ is a nice manifold with corners (sometimes called a manifold with faces). All faces of $Q$ (even $Q$ itself) are contractible and $Q$ is often homeomorphic to a simple polytope as manifolds with corners. MacPherson showed that $X$ is homeomorphic to the quotient space $(Q\times T)/\!\sim$ under some equivalence relation $\sim$ defined using the primitive vectors in the one-dimensional cones in the fan of $X$ (see \cite{fran10}). The one-dimensional cones correspond to the facets of $Q$ so that one can think of the primitive vectors as a map 
$$\v\colon \{Q_1,Q_2,\dots,Q_m\}\to \Z^n\qquad\text{($Q_i$'s are facets of $Q$). }$$ 
The map $\v$ satisfies some linear independence condition and a map satisfying the condition is called a \emph{characteristic function} on $Q$ (see Definition in Section~\ref{sect:1}). Note that there are many characteristic functions which do not arise from compact simplicial toric varieties. 

Bahri-Sarkar-Song \cite{ba-sa-so15} consider the quotient space $X(Q,\v)=(Q\times T)/\!\!\sim$. Although they restrict their concern to $Q$ being a simple polytope, the characteristic function $\v$ used to define the equivalence relation $\sim$ is arbitrary; so the quotient space \blue{does} not necessarily arise from a compact simplicial toric variety. They give a sufficient condition for $H^*(X(Q,\v))$ to be torsion-free in terms of $Q$ and $\v$. 
\red{They also give a Danilov-Jurkiewicz type description for the ring structure of $H^*(X(Q,\v))$ when it is torsion-free.}

In this paper, we also consider the quotient space $X=X(Q,\v)=(Q\times T)/\!\!\sim$ where $\v$ is arbitrary as above but our $Q$ is a \red{compact} connected nice manifold with corners and not necessarily a simple polytope. When $Q$ has a vertex (equivalently $X$ has a $T$-fixed point), our $X$ is a torus orbifold in the sense of \cite{ha-ma03}. 
We give an explicit description of $H^{k}(X)$ and $H^{2n-k}(X)$ for $k\le 2$ under some condition on $Q$. Motivated by the explicit description of $H^{2n-1}(X)$, we introduce a positive integer $\m(Q_I)$ depending on the characteristic function $\v$ for each $Q_I=\bigcap_{i\in I}Q_i$, where $I$ is a subset of $\{1,\dots,m\}$ and we understand $Q_I=Q$ when $I=\emptyset$ and $\m(Q_I)=1$ when $Q_I=\emptyset$. The $\m(Q_I)$'s are all one when $X$ has no singularity. Here is a summary of our results, which follows from Propositions~\ref{prop:3-1},~\ref{prop:5-1},~\ref{prop:5-2} and~\ref{prop:5-4}. 

\medskip
\noindent
{\bf Theorem.} 
{\it \red{Let $Q$ be a connected nice manifold with corners of dimension $n\ge 1$.} 
Let $p$ be a prime number and suppose that every face of $Q$ (even $Q$ itself) is acyclic with \blue{$\Z/p$-coefficients.} If $H^*(X(Q,\v))$ has no $p$-torsion, then $\m(Q_I)$ is coprime to $p$ for every $Q_I$. The converse holds when the face poset of $Q$ is isomorphic to the face poset of one of the following:
\begin{enumerate}
\item the suspension \red{$\lozenge^n$} of the $(n-1)$-simplex $\Delta^{n-1}$, i.e. \red{$\lozenge^n$} is obtained from $\Delta^{n-1}\times [-1,1]$ by collapsing $\Delta^{n-1}\times \{1\}$ and $\Delta^{n-1}\times \{-1\}$ to a point respectively,
\item $\Delta^n$,
\item $\Delta^{n-1}\times [-1,1]$.
\end{enumerate}
}

\begin{rema}
\red{(1) When $n\ge 3$, there are many nice manifolds with corners $Q$ which have the same face posets as $\lozenge^n$, $\Delta^n$ or $\Delta^{n-1}\times[-1,1]$ but not homeomorphic to them. For instance, one can produce such $Q$ by taking connected sum of them and integral homology $n$-spheres with non-trivial fundamental groups.}

(2) The $n$-simplex $\Delta^n$ and the prism $\Delta^{n-1}\times[-1,1]$ can be obtained from the suspension \red{$\lozenge^n$} by performing a vertex cut once and twice respectively. So, the reader might think that the converse mentioned in the theorem above would hold for $Q$ obtained from \red{$\lozenge^n$} by performing a vertex cut repeatedly. However, we will see in Section~\ref{sect:6} that this is not true for $Q$ obtained from \red{$\lozenge^3$} by performing a vertex cut four times. 
\end{rema}

The paper is organized as follows. In Section~\ref{sect:1} we set up
notations. 
In Section~\ref{sect:2} we compute $H^{2n-k}(X)$ $(k\le 2)$ for the
quotient space $X=(Q\times T)/\!\!\sim$ using the idea in Yeroshkin's
paper \cite{ye15}.
Namely, we delete a small neighborhood of the singular set in $X$ to obtain a smooth manifold and investigate the relation of the cohomology groups between $X$ and the smooth manifold. 
In Section~\ref{sect:2-1} we show that the quotient map $X\to Q$ induces
an isomorphism on their fundamental groups when $Q$ has a vertex. In
Section~\ref{sect:2-2} we apply the results in Sections~\ref{sect:2}
and~\ref{sect:2-1} to the case when $n=2$ and $3$.
In Section~\ref{sect:3} we introduce $\m(Q_I)$ and find a necessary condition for $H^{*}(X)$ to have no $p$-torsion. In Section~\ref{sect:4} we recall Theorem on Elementary Divisors and deduce two facts used in Section~\ref{sect:5}. In Section~\ref{sect:5} we prove that the necessary condition obtained in Section~\ref{sect:3} is sufficient for $Q$ mentioned in the theorem above. Section~\ref{sect:6} gives an example mentioned in the remark above. 
In the appendix we will observe that a result of Fischli or Jordan on $H^{2n-1}(X)$ and the torsion part of $H^{2n-2}(X)$ agrees with our Proposition~\ref{prop:2-1} when $X$ is a compact simplicial toric variety.

\section{Setting and notation} \label{sect:1}

In this section, we set up some notations and give some remarks. Let $Q$
be a connected manifold with corners of dimension $n$ 
(see \cite[p.180]{davi07} for the precise definition of a manifold with
corners). 
Then faces are defined and a codimension-one face is called a facet.
We assume that $Q$ is \emph{nice}, which means that every
codimension-$k$ face is a connected component of intersections of $k$
facets. The teardrop, 
which is homeomorphic to the 2-disk, is a
manifold with corners but not nice 
(see \cite[p.181]{davi07}). A simple polytope is a nice manifold with corners and any
intersection of faces is connected unless it is empty. However,
intersections of faces of a nice manifold with corners are not
necessarily connected. For instance, a 2-gon, 
that is the suspension \red{$\lozenge^2$} in the theorem in the Introduction, is
a nice manifold with corners but the intersection of the two facets consists of two vertices. 


Let $S^1$ be the unit circle group of the complex numbers $\C$ and $T$ be an $n$-dimensional connected compact abelian Lie group. As is well-known, $T$ is isomorphic to $(S^1)^n$. We set 
\[
N:=\Hom(S^1,T)\cong\Z^n.
\]
Let $Q$ have $m$ facets and we denote them by $Q_1,\dots,Q_m$. 

\begin{defi}
A function $\v\colon \{Q_1,\dots,Q_m\}\to N$ is called a \emph{characteristic function on $Q$} if it satisfies the following two conditions: 
\begin{enumerate}
\item $\v(Q_i)$ is primitive for each $i\in [m]:=\{1,\dots,m\}$ and 
\item whenever $Q_I=\bigcap_{i\in I}Q_i$ is nonempty for $I\subset [m]$, $\v(Q_i)$'s $(i\in I)$ are linearly independent over $\Q$. 
\end{enumerate}
\red{We denote by $\hat N$ the sublattice of $N$ generated by $v_1,\dots,v_m$.}
\end{defi}
\noindent
We call $\v(Q_i)$'s the \emph{characteristic vectors} and abbreviate $\v(Q_i)$ as $v_i$.
Condition (2) above implies that when $Q$ has a vertex, $\rank\hN=n$. It also implies that when $Q_I\not=\emptyset$, the toral subgroup of $T$ generated by $v_i(S^1)$'s $(i\in I)$, denoted by $T_I$, is of dimension $|I|$ where $|I|$ is the cardinality of $I$. 

To the pair $(Q,\v)$ we associate a quotient space 
\[
X(Q,\v):=(Q\times T)/\!\!\sim
\]
with the equivalence relation $\sim$ on the product $Q\times T$ defined by 
\[
(q,t)\sim (q',t') \text{ if and only if $q=q'$ and $t^{-1}t'\in T_I$}
\]
where $I$ is the subset of $[m]$ such that $Q_I$ is the smallest face of $Q$ containing $q=q'$. The space $X(Q,\v)$ has a $T$-action induced from the natural $T$-action on $Q\times T$. The orbit space of $X(Q,\v)$ by the $T$-action is $Q$ and the quotient map 
\[
\q\colon X(Q,\v)\to Q=X(Q,\v)/T
\]
is induced from the projection map $Q\times T\to Q$. \red{Then it is not difficult to see the following facts (see \cite{ma-so08} for example).} 
A $T$-fixed point in $X(Q,\v)$ corresponds to a vertex of $Q$, so $X(Q,\v)$ has a $T$-fixed point if and only if $Q$ has a vertex. 
If $v_i$'s $(i\in I)$ are a part of a basis of $N$ for every $I$ with $Q_I\not=\emptyset$, then $X(Q,\v)$ is a manifold but otherwise $X(Q,\v)$ is an orbifold. The singularity of $X(Q,\v)$ lies in the union of $\q^{-1}(Q_I)$ over all $I$ with $|I|\ge 2$. 


As mentioned in the Introduction, if $X$ is a compact simplicial toric variety of complex dimension $n$ so that $X$ has an algebraic action of $(\C^*)^n$ having a dense orbit, then the orbit space $Q$ of $X$ by the compact $n$-dimensional subtorus $T$ of $(\C^*)^n$ is a nice manifold with corners and $X$ is homeomorphic to $X(Q,\v)$ where $v_i$'s are primitive edge vectors of the fan associated to $X$. Moreover, faces of $Q$ (even $Q$ itself) are all contractible, which follows from the existence of the residual action of $(\C^*)^n/T$ on $Q=X/T$.

\section{$H^{2n-k}(X(Q,\v))$ 
for $k\le 2$} \label{sect:2}

In this section, we abbreviate $X(Q,\v)$ as $X$ and 
all (co)homology groups will be taken with $\Z$-coefficients unless otherwise stated. When $n=1$, $Q$ is a closed interval if $Q$ has a vertex and a circle otherwise, and $X$ is homeomorphic to $S^2$ or a torus accordingly. We will assume $n\ge 2$ in this section. Remember that $\q\colon X\to Q$ is the quotient map. 


Let $Q^{(n-2)}$ be the union of $Q_I$ over all $I$ with $|I|\ge 2$ and we assume $Q^{(n-2)}\not=\emptyset$. The singular set of $X$ lies in $\q^{-1}(Q^{(n-2)})$ as remarked in Section~\ref{sect:1}. Let $Q'$ be a \lq\lq small closed tubular neighborhood" of $Q^{(n-2)}$ of $Q$ and set $X':=\q^{-1}(Q')$.

\vspace{5mm}
\begin{lemm} \label{lemm:2-1}
$H^{2n-k}(X)\cong H_k(X\backslash \Int X')$ for $k\le 2$.
\end{lemm}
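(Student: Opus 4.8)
The plan is to set $M := X\setminus \Int X'$ and obtain the isomorphism by composing three standard maps: an excision, Poincar\'e--Lefschetz duality on $M$, and a dimension count that annihilates the error terms precisely in the range $k\le 2$. This follows the strategy of Yeroshkin alluded to above: delete a neighborhood of the singular set and compare $X$ with the resulting smooth manifold.

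First I would check that $M$ is a compact smooth oriented $2n$-manifold with boundary $\partial M=\q^{-1}(\partial Q')$. Indeed, the singular set of $X$ lies in $\q^{-1}(Q^{(n-2)})\subset\Int X'$, so deleting $\Int X'$ removes all singular points; and since $\partial Q'$ misses $Q^{(n-2)}$, the boundary $\partial X'=\q^{-1}(\partial Q')$ sits in the smooth locus. The torus-orbifold structure orients the smooth locus of $X$, so $M$ is oriented and Poincar\'e--Lefschetz duality gives
\[
H_k(M)\cong H^{2n-k}(M,\partial M).
\]
Next I would identify $H^{2n-k}(M,\partial M)$ with $H^{2n-k}(X)$. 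Writing $X=M\cup X'$ with $M\cap X'=\partial M=\partial X'$ and using the collar of $\partial X'$, excision yields $H^{j}(X,X')\cong H^{j}(M,\partial M)$ for all $j$. It therefore suffices to show that the natural map $H^{j}(X,X')\to H^{j}(X)$ in the long exact sequence of the pair $(X,X')$ is an isomorphism for $j=2n-k$ with $k\le 2$. By the exactness of
\[
H^{j-1}(X')\to H^{j}(X,X')\to H^{j}(X)\to H^{j}(X'),
\]
this reduces to the vanishing of $H^{j-1}(X')$ and $H^{j}(X')$.

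The crucial dimension count is the last step. Since $Q'$ deformation retracts onto $Q^{(n-2)}=\bigcup_{|I|\ge 2}Q_I$, the space $X'$ deformation retracts onto $Z:=\q^{-1}(Q^{(n-2)})$. Over the relative interior of a face $Q_I$ the fiber of $\q$ is $T/T_I$ of dimension $n-|I|$, so $\q^{-1}(Q_I)$ has dimension $2(n-|I|)\le 2n-4$ whenever $|I|\ge 2$; hence $\dim Z\le 2n-4$ and $H^{i}(X')\cong H^{i}(Z)=0$ for $i\ge 2n-3$. For $k\le 2$ we have $j=2n-k\ge 2n-2$, so both $j$ and $j-1$ are $\ge 2n-3$ and the two groups above vanish. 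Combining, $H^{2n-k}(X)\cong H^{2n-k}(X,X')\cong H^{2n-k}(M,\partial M)\cong H_k(M)$, as claimed.

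I expect the main issues to be bookkeeping rather than ideas: justifying the excision legitimately (via the collar on $\partial X'$) and ensuring orientability of $M$ is available for the $\Z$-coefficient Lefschetz duality. The threshold $k\le 2$ is forced exactly by $\dim Z\le 2n-4$: for $k=3$ one would instead need $H^{2n-4}(X')=0$, which can fail, so the statement is sharp precisely here.
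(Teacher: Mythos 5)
Your argument is exactly the paper's: both reduce $H^{2n-k}(X)$ to $H^{2n-k}(X,X')$ via the long exact sequence of the pair using the vanishing $H^i(X')=0$ for $i\ge 2n-3$ (from $\dim\q^{-1}(Q^{(n-2)})=2n-4$), then apply excision and Poincar\'e--Lefschetz duality on $X\setminus\Int X'$. The proposal is correct; it merely spells out the orientation and collar details that the paper leaves implicit.
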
 

\begin{proof}
Note that $H^r(X')=0$ for $r\ge 2n-3$ because $X'$ is homotopy equivalent to $\q^{-1}(Q^{(n-2)})$ and $\dim \q^{-1}(Q^{(n-2)})=2n-4$. Therefore, the exact sequence in cohomology for the pair $(X,X')$ yields an isomorphism
\begin{equation} \label{eq:2-1}
H^{2n-k}(X,X')\cong H^{2n-k}(X) \quad\text{for $k\le 2$}. 
\end{equation}

On the other hand, 
\begin{equation} \label{eq:2-2}
\begin{split}
H^{2n-k}&(X,X')\cong H^{2n-k}(X\backslash \Int X',\partial X')\quad \text{by excision}\\
&\cong H_k(X\backslash \Int X') \quad \text{by Poincar\'e-Lefschetz duality}.
\end{split}
\end{equation}
(Note that $X\backslash \Int X'$ is a manifold with boundary $\partial X'$.) The lemma follows from \eqref{eq:2-1} and \eqref{eq:2-2}. 
\end{proof}

\begin{prop} \label{prop:2-1}
$H^{2n}(X)\cong \Z$ and $H^{2n-1}(X)\cong H_1(Q)\oplus N/\hN$. If $H_1(Q_i)=0$ for every $i$, then 
\[
H^{2n-2}(X)\cong\Z^{m-\rank\hN}\oplus H_2(Q)\oplus\big(H_1(Q)\otimes H_1(T)\big)\oplus \big(\wedge^2 N/\hN\wedge N\big).
\]
\end{prop}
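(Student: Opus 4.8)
The plan is to compute $H_k(X \setminus \Int X')$ for $k \le 2$ and invoke Lemma~\ref{lemm:2-1}, which identifies this with $H^{2n-k}(X)$. The key observation is that away from the singular set, the quotient map $\q$ exhibits $X \setminus \Int X'$ as a $T$-bundle-like object over $Q \setminus \Int Q'$, where $Q' $ is the tubular neighborhood of $Q^{(n-2)}$. More precisely, over the complement of $Q^{(n-2)}$ every point lies in at most one facet, so the equivalence relation $\sim$ only collapses a single circle $v_i(S^1)$ over points of $Q_i \setminus Q^{(n-2)}$. This means $X \setminus \Int X'$ is built from the trivial bundle $(Q \setminus \Int Q') \times T$ by collapsing the circle subgroup $v_i(S^1)$ over each facet piece. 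First I would set up this description carefully and record that $H_1(Q_i) = 0$ for all $i$ (together with the standing hypothesis that $Q^{(n-2)} \ne \emptyset$) controls the local contributions.

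The main computational engine would be a spectral sequence or a Mayer--Vietoris / long-exact-sequence argument comparing $X \setminus \Int X'$ with the free quotient. A clean route is to use the Leray--Serre spectral sequence of the map $X \setminus \Int X' \to Q \setminus \Int Q'$ with fiber $T$, whose $E_2$-page is $H_p(Q \setminus \Int Q') \otimes H_q(T)$ plus correction terms coming from the collapsed circles over the facets. The three direct summands in the statement should emerge as follows: the summand $\Z^{m - \rank \hN}$ comes from the facets, each contributing a class but with the relations imposed by the characteristic vectors cutting the rank of $\hN$ down from $m$; the summand $H_1(Q) \otimes H_1(T)$ is the genuine bundle contribution $E_2^{1,1}$; and the summand $\wedge^2 N / \hN \wedge N$ is the degree-two fiber contribution $H_2(T) = \wedge^2 N$, modded out by the image of the characteristic data. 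The term $H_2(Q)$ is the base contribution $E_2^{2,0}$. I would compute these $E_2$ entries, check that all relevant differentials into and out of the $k \le 2$ region vanish (using $H_1(Q_i) = 0$ to kill unwanted terms and using that $Q \setminus \Int Q'$ deformation retracts appropriately onto $Q$ away from codimension $\ge 2$), and then verify the extension problem splits.

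For the low-degree pieces $H^{2n}(X) \cong \Z$ and $H^{2n-1}(X) \cong H_1(Q) \oplus N/\hN$, the computation is lighter. The top class $H^{2n}(X) \cong \Z$ reflects that $X$ is a closed orientable $2n$-orbifold (orientability following from the $T$-action structure), so via Lemma~\ref{lemm:2-1} this is $H_0(X \setminus \Int X') \cong \Z$ since $X \setminus \Int X'$ is connected. For $H^{2n-1}(X) \cong H_1(X \setminus \Int X')$, the abelianized fundamental group of the free part splits as the base contribution $H_1(Q)$ plus the fiber contribution $H_1(T) = N$ reduced modulo the circles $v_i(S^1)$ that get collapsed, giving exactly $N/\hN$; here Section~\ref{sect:2-1}'s result that $\q$ induces an isomorphism on $\pi_1$ (when $Q$ has a vertex) helps pin down the base contribution.

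The hard part will be the honest bookkeeping of the collapsed-circle contributions and verifying that the relevant spectral-sequence differentials vanish, rather than any single conceptual step. In particular, isolating the summand $\wedge^2 N / \hN \wedge N$ requires tracking exactly which $2$-torus classes survive after collapsing circles over each facet and over each codimension-two intersection $Q_{ij}$ near $\partial Q'$; the boundary $\partial X'$ is where the collapsed circles interact, and controlling its contribution to the long exact sequence of the pair $(X \setminus \Int X', \partial X')$ is delicate. I expect the hypothesis $H_1(Q_i) = 0$ to be precisely what makes the facet contributions rank-computable and forces the extension to split, so I would watch for where acyclicity of the facets is genuinely used and make sure no hidden torsion sneaks in through the boundary terms.
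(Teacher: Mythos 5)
Your proposal follows essentially the same route as the paper: reduce to $H_k(X\setminus \Int X')$ via Lemma~\ref{lemm:2-1}, then analyze the nonsingular part through its structure over $Q\setminus \Int Q'$ --- the paper carries out exactly the Mayer--Vietoris branch of your plan, covering $Q\setminus Q'$ by an interior piece $Q^0$ with preimage $\simeq Q\times T$ and a collar $Q^1$ of $\partial Q$ with preimage $\simeq\bigsqcup_i Q_i\times T/v_i(S^1)$, and your identification of the sources of each summand (facet relations giving $\Z^{m-\rank\hN}$, the fiber classes giving $N/\hN$ and $\wedge^2N/\hN\wedge N$, the base giving $H_1(Q)$, $H_2(Q)$, $H_1(Q)\otimes H_1(T)$) matches the paper's computation of $\ker f_1$, $\coker f_1$ and $\coker f_2$. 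The one caution is that the Leray--Serre spectral sequence you name as the primary engine is not literally available, since $\q$ restricted to $X\setminus\Int X'$ is not a fibration (the fiber jumps from $T$ over interior points to $T/v_i(S^1)$ over facet points), so the Mayer--Vietoris alternative you mention in passing is the version to actually carry out.
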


\begin{rema}
When $Q$ has a vertex, $\rank\hN=n$ as remarked in Section~\ref{sect:1}. Moreover, when $Q$ has a vertex and $n=2$, the last term $\wedge^2 N/\hN\wedge N$ above is zero. Indeed, since we may assume $N=\Z^2$ and $\hN=\langle e_1, ae_2\rangle$ with some integer $a$, $\hN\wedge N=\langle e_1\wedge e_2\rangle =\wedge^2N$, where $\{e_1,e_2\}$ denotes the standard base of $\Z^2$. 
\end{rema}

\begin{proof}
The statement for $H^{2n}(X)$ follows immediately from Lemma~\ref{lemm:2-1}. 

We shall prove the statement for $H^{2n-1}(X)$. 
Let $Q^0:=(\Int Q)\cap (Q\backslash Q')$ and $Q^1$ be the intersection of $(Q\backslash Q')$ and a small open neighborhood of $\partial Q$ in $Q$.

\vspace{7mm}
\begin{tikzpicture}[scale=0.5]

\draw[thick](0,0)--(4,0)--(0,4)-- cycle;
\draw[very thick,,below] (2,0) node{{\large $Q$}};

\coordinate (G) at (8,1);
\coordinate (H) at (9,0);
\draw [thick,bend left,distance=0.7cm] (G) to node [inner sep=0.2pt,circle] {} (H); 
\draw[thick](8,1)--(8,0)--(9,0);

\coordinate (I) at (10.5,0);
\coordinate (J) at (11,1);
\draw [thick,bend left,distance=0.65cm] (I) to node [inner sep=0.2pt,circle] {} (J); 
\draw[thick](10.5,0)--(12,0)--(11,1); 

\coordinate (K) at (8,2.5);
\coordinate (L) at (9,3);
\draw [thick,bend right,distance=0.7cm] (K) to node [inner sep=0.2pt,circle] {} (L); 
\draw[thick](8,2.5)--(8,4)--(9,3);
\draw[very thick,below] (10,0) node{{\large $Q'$}};
\end{tikzpicture}
\vspace{3mm}

\begin{tikzpicture}[scale=0.7]

\coordinate (A) at (0,1);
\coordinate (B) at (1,0);
\draw [dashed,very thick,bend left,distance=0.7cm] (A) to node [inner sep=0.2pt,circle] {} (B); 
\draw[very thick](0,1)--(0,2.5);

\coordinate (C) at (0,2.5);
\coordinate (D) at (1,3);
\draw [dashed,very thick,bend right,distance=0.7cm] (C) to node [inner sep=0.2pt,circle] {} (D); 
\draw[very thick](1,3)--(3,1);

\coordinate (E) at (3,1);
\coordinate (F) at (2.5,0);
\draw [dashed,very thick,bend right,distance=0.7cm] (E) to node [inner sep=0.2pt,circle] {} (F); 
\draw[very thick](1,0)--(2.5,0);

\draw[very thick,below] (2,0) node{{\large $Q\setminus Q'$}};
\draw[very thick] (5,1) node{{\large $=$}}; 

\coordinate (G) at (6,1);
\coordinate (H) at (7,0);
\draw [dashed,very thick,bend left,distance=0.7cm] (G) to node [inner sep=0.2pt,circle] {} (H); 
\draw[dashed,very thick](6,1)--(6,2.5);

\coordinate (I) at (6,2.5);
\coordinate (J) at (7,3);
\draw [dashed,very thick,bend right,distance=0.7cm] (I) to node [inner sep=0.2pt,circle] {} (J); 
\draw[dashed,very thick](7,3)--(9,1);

\coordinate (K) at (9,1);
\coordinate (L) at (8.5,0);
\draw [dashed,very thick,bend right,distance=0.7cm] (K) to node [inner sep=0.2pt,circle] {} (L); 
\draw[dashed,very thick](7,0)--(8.5,0);

\draw[dashed,very thick,below] (8,0) node{{\large $Q^0$}};
\draw[dashed,very thick] (10,1.5) node{{\large $\bigcup$}};

\draw[very thick](11,1)--(11,2.5);
\draw[dashed,very thick](11.5,0.9)--(11.5,2.6);
\draw[dashed,very thick](11,2.5)--(11.5,2.6);
\draw[dashed,very thick](11,1)--(11.5,0.9);

\draw[very thick](12.5,3)--(14,1.5);
\draw[dashed,very thick](12,2.5)--(13.5,1);
\draw[dashed,very thick](12.5,3)--(12,2.5);
\draw[dashed,very thick](14,1.5)--(13.5,1);

\draw[very thick](12,0)--(13.4,0);
\draw[dashed,very thick](11.9,0.5)--(13.5,0.5);
\draw[dashed,very thick](12,0)--(11.9,0.5);
\draw[dashed,very thick](13.4,0)--(13.5,0.5);
\draw[dashed,very thick,below] (12.5,0) node{{\large $Q^1$}}; 
\end{tikzpicture} 

Since 
\[
\begin{split}
&\q^{-1}(Q^0)\simeq Q\times T,\quad \q^{-1}(Q^1)\simeq \bigsqcup_{i=1}^m \big(Q_i\times T/v_i(S^1)\big),\\
& \q^{-1}(Q^0)\cap \q^{-1}(Q^1)\simeq \bigsqcup_{i=1}^m (Q_i\times T),\quad \q^{-1}(Q^0\cup Q^1)=X\backslash X', 
\end{split}
\]
the Mayer-Vietoris exact sequence in homology for the triple $(X\backslash X', \q^{-1}(Q^0), \q^{-1}(Q^1))$ yields the following exact sequence:
\begin{equation} \label{eq:2-3}
\begin{split}
&\bigoplus_{i=1}^m H_2(Q_i\times T)\stackrel{f_2}\longrightarrow H_2(Q\times T)\oplus \bigoplus_{i=1}^m H_2(Q_i\times T/v_i(S^1))\to H_2(X\backslash X') \\
\to &\bigoplus_{i=1}^m H_1(Q_i\times T)\stackrel{f_1}\longrightarrow H_1(Q\times T)\oplus \bigoplus_{i=1}^m H_1(Q_i\times T/v_i(S^1))\to H_1(X\backslash X') \\ 
\to &\bigoplus_{i=1}^m H_0(Q_i\times T)\stackrel{f_0}\longrightarrow H_0(Q\times T)\oplus \bigoplus_{i=1}^m H_0(Q_i\times T/v_i(S^1)).
\end{split}
\end{equation} 
As is easily seen, $f_0$ is injective; so 
\begin{equation} \label{eq:2-4}
H_1(X\backslash X')\cong \coker f_1. 
\end{equation}
We write $f_1$ as $(\psi_1,\varphi_1)$ according to the decomposition of the target space. Since 
\[
\varphi_1\colon \bigoplus_{i=1}^m H_1(Q_i\times T)\to \bigoplus_{i=1}^m H_1(Q_i\times T/v_i(S^1)),
\]
which is $f_1$ composed with the projection on the second factor, is surjective, one has 
\begin{equation} \label{eq:2-5}
\coker f_1\cong H_1(Q\times T)/\psi_1(\ker \varphi_1). 
\end{equation}
Since $H_1(Y\times T)=H_1(Y)\oplus H_1(T)$ for any topological space $Y$, elements in $\ker\varphi_1$ are of the form $(c_1v_1,\dots,c_mv_m)$ with integers $c_i$, where $H_1(T)$ is identified with \red{$N=\Hom(S^1,T)$} in a natural way. It follows that 
\begin{equation} \label{eq:2-6}
H_1(Q\times T)/\psi_1(\ker \varphi_1)\cong H_1(Q)\oplus N/\hN.
\end{equation}
The statement for $H^{2n-1}(X)$ in the proposition follows from \eqref{eq:2-4}, \eqref{eq:2-5}, \eqref{eq:2-6} and Lemma~\ref{lemm:2-1}. 

The computation of $H^{2n-2}(X)$ is similar to that of $H^{2n-1}(X)$. We write $f_2$ as $(\psi_2,\varphi_2)$ similarly to $f_1$. \red{Since $H_1(Q_i)=0$ for any $i$ by assumption}, $\ker f_1$ is a free abelian group of rank $m-\rank \hN$ \red{as is easily seen}; so it follows from \eqref{eq:2-3} that 
\begin{equation} \label{eq:2-7}
H_2(X\backslash X')\cong \Z^{m-\rank \hN}\oplus \coker f_2.
\end{equation} 
Similarly to $\varphi_1$, the map 
\begin{equation} \label{eq:2-8}
\varphi_2\colon \bigoplus_{i=1}^m H_2(Q_i\times T)\to \bigoplus_{i=1}^m H_2(Q_i\times T/v_i(S^1))
\end{equation}
is surjective; so 
\begin{equation} \label{eq:2-9}
\coker f_2\cong H_2(Q\times T)/\psi_2(\ker \varphi_2).
\end{equation}
Here, 
\begin{equation} \label{eq:2-10}
H_2(Y\times T)=H_2(Y)\oplus \big(H_1(Y)\otimes H_1(T)\big)\oplus H_2(T)
\end{equation}
for any topological space $Y$ by the K\"unneth formula. Therefore, since $H_1(Q_i)=0$ by assumption, it follows from \eqref{eq:2-8} and \eqref{eq:2-10} that $\ker\varphi_2$ is contained in $\bigoplus_{i=1}^mH_2(T)$. We note that $H_2(T)$ and $H_2(T/v_i(S^1))$ can be identified with $\wedge^2N$ and $\wedge^2(N/\langle v_i\rangle)$ respectively and the kernel of the projection $\wedge^2 N\to \wedge^2(N/\langle v_i\rangle)$ is $\langle v_i\rangle\wedge N$. Therefore 
\[
\coker f_2\cong 
H_2(Q)\oplus\big(H_1(Q)\otimes H_1(T)\big)\oplus \big(\wedge^2 N/\hN\wedge N\big)
\]
This together with \eqref{eq:2-7} and \eqref{eq:2-9} proves the statement for $H^{2n-2}(X)$ in the proposition. 
\end{proof}

\section{Fundamental groups} \label{sect:2-1}

For a subset $I$ of $[m]$, we define
\[
T^m_I:=\{(h_1,\dots,h_m)\in T^m\mid h_j=1 \quad (\forall j\notin I)\}.
\]
and consider a space 
\[
\mathcal{Z}_Q:=(Q\times T^m)/\!\!\sim_e
\]
where $\sim_e$ is the equivalence relation on the product $Q\times T^m$ defined by 
\[
(q,s)\sim_e (q',s') \text{ if and only if $q=q'$ and $s^{-1}s'\in T^m_I$}
\]
and $I$ is the subset of $[m]$ such that $Q_I$ is the smallest face of $Q$ containing $q=q'$. 

\red{We note that $\mathcal{Z}_Q$ locally admits a smooth structure. Indeed, since $Q$ is a manifold with corners, any point of $Q$ has a neighborhood $U$ homeomorphic to $(\R_{\ge 0})^r\times \R^{n-r}$ for some $0\le r\le n$ and it follows from the construction of $\mathcal{Z}_Q$ that the inverse image of $U$ by the projection map $\kappa\colon \mathcal{Z}_Q\to Q$ is homeomorphic to $\C^r\times \R^{n-r}\times T^{m-r}$. Therefore $\mathcal{Z}_Q$ locally admits a smooth structure and hence is a topological manifold. 
\begin{rema}
When $Q$ is a simple polytope, $\mathcal{Z}_Q$ is called a moment-angle manifold and it is known that $\mathcal{Z}_Q$ admits a smooth structure and is 2-connected (see \cite{bu-pa02} or \cite{bu-pa15}). Moreover, the moment-angle manifold $\mathcal{Z}_Q$ is homotopy equivalent to $\C^{m}-Z$ defined in \cite{co95} (see Theorem 4.7.5 in \cite{bu-pa15}), where $Z$ is the union of coordinate subspaces in $\C^m$ determined by $Q$. 
\end{rema}}

\begin{lemm} \label{lemm:1}
The projection map $\kappa\colon \mathcal{Z}_Q\to Q$ induces an isomorphism $\kappa_*\colon \pi_1(\mathcal{Z}_Q)\cong \pi_1(Q)$ on the fundamental groups. 
\end{lemm}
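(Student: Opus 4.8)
### Proof strategy for Lemma~\ref{lemm:1}

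The plan is to show that $\kappa\colon \mathcal{Z}_Q\to Q$ induces an isomorphism on fundamental groups by building a section of $\kappa$ over the $1$-skeleton of $Q$ and analyzing how loops in $\mathcal{Z}_Q$ project. The key structural fact is that the fiber $\kappa^{-1}(q)$ over an interior point $q$ of $Q$ is the full torus $T^m$, while over a point in the relative interior of $Q_I$ the fiber collapses to $T^m/T^m_I$; thus whenever $q$ lies in a facet, at least one circle factor has been killed. This collapsing is exactly what will let us contract the torus directions.

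First I would establish surjectivity of $\kappa_*$. Since $Q$ is a manifold with corners of dimension $n\ge 1$, its boundary $\partial Q$ lies in the union of facets, and for any point $q$ in the relative interior of a facet the fiber $\kappa^{-1}(q)$ is homeomorphic to $T^m/v$ for a circle subgroup corresponding to that facet. Choosing a section $Q\to \mathcal{Z}_Q$ over a point (using the constant element $1\in T^m$) gives a right inverse at the level of spaces, so any loop in $Q$ lifts to a loop in $\mathcal{Z}_Q$; this immediately yields surjectivity of $\kappa_*$.

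The main work is injectivity. I would take a loop $\gamma$ in $\mathcal{Z}_Q$ whose image $\kappa\circ\gamma$ is null-homotopic in $Q$ and show $\gamma$ itself is null-homotopic. After a homotopy of $\gamma$ using the null-homotopy of $\kappa\circ\gamma$ downstairs (lifting the homotopy, which is possible because $\kappa$ restricted over the interior is a trivial $T^m$-bundle and over faces is a quotient-bundle with the right covering-homotopy behavior), one reduces to the case where $\gamma$ lies in a single fiber, or more precisely in $\kappa^{-1}(q_0)$ for a fixed basepoint $q_0$. The crucial observation is that one can push $q_0$ into the relative interior of a facet $Q_i$, where the fiber is $T^m/v_i(S^1)$: here the circle $v_i(S^1)$ has been collapsed, and a loop running in the $v_i$-direction of $T^m$ becomes contractible in $\mathcal{Z}_Q$ by sliding the basepoint onto $Q_i$. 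Iterating this over all facets meeting at a vertex (which exists and can serve as basepoint, using that $\kappa^{-1}$ of the vertex is a single point as in Section~\ref{sect:1}) kills every torus direction, so the fiber class of $\gamma$ dies in $\pi_1(\mathcal{Z}_Q)$.

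The hard part will be making the ``sliding'' argument precise, i.e.\ controlling the homotopy that moves a loop from one fiber to a face where a given circle factor collapses, and verifying that these collapsing homotopies are compatible when several facets meet. I would organize this using the local product structure $\kappa^{-1}(U)\cong \C^r\times \R^{n-r}\times T^{m-r}$ recorded just before the lemma: near a codimension-$r$ face the total space is $\C^r\times\R^{n-r}\times T^{m-r}$, whose fundamental group is that of $T^{m-r}$, and the $r$ collapsed circles correspond exactly to the $\C^r$ factor, which is simply connected. Patching these local trivializations via a van Kampen argument over a cover of $Q$ by such neighborhoods, and using that every torus generator is collapsed over some facet while $Q$'s own $\pi_1$ contributes the remaining generators, should give $\pi_1(\mathcal{Z}_Q)\cong\pi_1(Q)$ directly.
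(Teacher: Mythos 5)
Your outline contains the right germ of the argument (the $i$-th coordinate circle of $T^m$ dies over the facet $Q_i$, and the local models $\C^r\times\R^{n-r}\times T^{m-r}$ control everything), but the injectivity step has a genuine gap. You reduce to loops in a single fiber by ``lifting the null-homotopy'' of $\kappa\circ\gamma$, asserting that $\kappa$ has ``the right covering-homotopy behavior'' over the faces. That is precisely what is not available: $\kappa$ is not a fibration --- the fibers jump from $T^m$ to $T^m/T^m_I$ along the boundary strata --- so the homotopy lifting property must be replaced by something else, and this replacement is the actual content of the proof. The paper does it by observing that each $Z_i=\kappa^{-1}(Q_i)$ is a locally flat closed codimension-two submanifold of the topological manifold $\mathcal{Z}_Q$ (a tubular neighborhood being the complex line bundle associated to the principal $S^1$-bundle $\kappa^{-1}(Q_i\times\{1\})\to Z_i$), so that by general position every loop in $\mathcal{Z}_Q$ can be pushed off all the $Z_i$ into $\kappa^{-1}(\Int Q)=\Int Q\times T^m$. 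This gives surjectivity of $\iota_*\colon\pi_1(Q)\times\pi_1(T^m)\to\pi_1(\mathcal{Z}_Q)$; since $\kappa_*\circ\iota_*$ is projection onto the first factor and each $\iota(\{q_i\}\times S_i)$ is a fiber of the $S^1$-bundle over $Z_i$ and hence null-homotopic, the kernel of $\kappa_*$ is forced to be trivial. Your van Kampen suggestion could in principle substitute for this, but you do not carry it out, and the transversality step is the idea you would need to make either version work.

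A second, smaller error: you claim that $\kappa^{-1}(\text{vertex})$ is a single point and that sliding over the facets through one vertex ``kills every torus direction.'' That is true for $X(Q,\v)=(Q\times T)/\!\!\sim$ but not for $\mathcal{Z}_Q=(Q\times T^m)/\!\!\sim_e$: over a vertex lying in $Q_I$ with $|I|=n$ the fiber is $T^m/T^m_I\cong T^{m-n}$, so the facets through one vertex only collapse $n$ of the $m$ coordinate circles. To kill all of $\pi_1(T^m)$ you must use every facet $Q_1,\dots,Q_m$, which also shows the lemma needs no vertex hypothesis at all. Finally, note that your surjectivity argument via the section $q\mapsto[(q,1)]$ is fine and is actually slightly more direct than the paper's.
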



\begin{proof}
\red{Similarly to the above argument, one can see that $\kappa^{-1}(Q_i)$, where $Q_i$ is a facet of $Q$, is a locally smooth closed manifold. Moreover, it is a locally smooth codimension two submanifold of $\mathcal{Z}_Q$. Indeed, 
a closed tubular neighborhood of $Q_i$ in $Q$ can be identified with $Q_i\times [0,1]$, and $\rho_i\colon \kappa^{-1}(Q_i\times\{1\})\to \kappa^{-1}(Q_i)$, where $\rho_i$ is induced from $((q,1),t)\to (q,t)$ for $q\in Q_i=Q_i\times\{0\}\subset Q_i\times [0,1]\subset Q$ and $t\in T^m$, is a principal $S^1$-bundle, and the total space $E_i$ of the associated complex line bundle can be identified with a closed tubular neighborhood of $Z_i:=\kappa^{-1}(Q_i)$ in $\mathcal{Z}_Q$.} 

\red{Since $Z_i$ is a locally smooth closed codimension two submanifold of $\mathcal{Z}_Q$, the transversality argument can be applied.} 
Therefore, if a continuous map $f\colon S^1\to \mathcal{Z}_Q$ meets $Z_i$, then one can slightly push $f$ in the fiber direction of $E_i$ so that the deformed $f$ does not meet $Z_i$. Applying this deformation to $f$ for every $i$, we see that $f$ is homotopic to a continuous map whose image lies in $\kappa^{-1}(\Int Q)=\Int Q\times T^m$. This means that the inclusion map $\iota\colon \Int Q\times T^m\to \mathcal{Z}_Q$ induces an epimorphism 
\[
\iota_*\colon \pi_1(\Int Q\times T^m)=\pi_1(\Int Q)\times \pi_1(T^m)\to \pi_1(\mathcal{Z}_Q).
\]
Since $\Int Q$ is homotopy equivalent to $Q$, we may replace $\Int Q$ by $Q$ above and we have a sequence 
\begin{equation} 
\pi_1(Q)\times \pi_1(T^m)\stackrel{\iota_*}\longrightarrow \pi_1(\mathcal{Z}_Q) \stackrel{\kappa_*}\longrightarrow \pi_1(Q),
\end{equation}
where the composition $\kappa_*\circ\iota_*$ agrees with the projection on the first factor, so that the kernel of $\iota_*$ is contained in the second factor $\pi_1(T^m)$. 

Let $S_i$ be the $i$-th $S^1$-factor of $T^m$ and choose a point $q_i\in (Q_i\times\{1\})\cap\Int Q$. Then $\iota(\{q_i\}\times S_i)$ is a fiber of the principal $S^1$-bundle $\rho_i\colon \kappa^{-1}(Q_i\times \{1\})\to Z_i=\kappa^{-1}(Q_i)$, so it shrinks to a point in $Z_i$. 
Therefore $\pi_1(T^m)$ is in the kernel of the epimorphism $\iota_*$ and this implies the lemma.
\end{proof}

We recall a result from Bredon's book \cite{bred72}. 

\begin{lemm}\cite[Corollary 6.3 \red{on} p.91]{bred72}. \label{lemm:2}
If $X$ is \red{an} arcwise connected $G$-space, $G$ compact Lie, and if there is an orbit which is connected (e.g., $G$ connected or $X^G\not=\emptyset$), then the quotient map $X\to X/G$ induces an epimorphism on their fundamental groups.
\end{lemm}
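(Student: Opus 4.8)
The plan is to recognize this statement as a consequence of the path-lifting property for orbit maps of compact Lie group actions, and then to close up lifted paths inside a connected orbit. Write $q\colon X\to X/G$ for the quotient map. Since $X$ is arcwise connected, so is $X/G$, and the fundamental groups are independent of the choice of basepoint; I would therefore begin by choosing the basepoint $x_0\in X$ to lie on an orbit $\mathcal O=Gx_0$ that is connected, which is possible by hypothesis (in the two quoted special cases, either $G$ is connected, so every orbit $G/G_x$ is a continuous image of $G$ and hence connected, or $X^G\neq\emptyset$ and we take $x_0$ to be a fixed point, whose orbit is a single point). As a homogeneous space $G/G_{x_0}$, the orbit $\mathcal O$ is a manifold, so connected implies path-connected.

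The heart of the argument is the following path-lifting assertion: for every path $\omega\colon[0,1]\to X/G$ and every choice of initial point $\tilde\omega(0)\in q^{-1}(\omega(0))$, there is a continuous lift $\tilde\omega\colon[0,1]\to X$ with $q\circ\tilde\omega=\omega$. Granting this, surjectivity of $q_*$ follows quickly. Given a loop $\gamma$ in $X/G$ based at $q(x_0)$, lift it to a path $\tilde\gamma$ in $X$ starting at $x_0$; its endpoint $\tilde\gamma(1)$ lies in the fibre $q^{-1}(q(x_0))=\mathcal O$. Choose a path $\delta$ in $\mathcal O$ from $\tilde\gamma(1)$ back to $x_0$, which is possible since $\mathcal O$ is path-connected. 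Then $\tilde\gamma\ast\delta$ is a loop in $X$ based at $x_0$, and because $\delta$ stays inside a single orbit its image $q\circ\delta$ is constant; hence $q_*[\tilde\gamma\ast\delta]=[\gamma\ast(q\circ\delta)]=[\gamma]$. As $\gamma$ was arbitrary, $q_*$ is an epimorphism.

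To establish the path-lifting assertion I would use the slice theorem for compact Lie group actions, valid here because the spaces in question are completely regular (in the paper's application $X=\mathcal Z_Q$ is even a topological manifold). Each orbit $Gx$ admits a tube, a $G$-invariant neighbourhood $G$-homeomorphic to $G\times_{G_x}S$ with image $q(G\times_{G_x}S)\cong S/G_x$ open in $X/G$. Covering $\omega([0,1])$ by finitely many such tube images and subdividing $[0,1]$ so that each subinterval maps into one of them, it suffices to lift over a single tube and to match the lifts at the subdivision points: any two lifts of the same point of $X/G$ differ by an element of $G$, which one applies to glue. Lifting over a tube reduces in turn to lifting a path in $S/G_x$ up to the slice $S$, i.e. to the same lifting problem for the action of the isotropy group $G_x$ on $S$; taking $S$ to be a cone on its link (or, in the smooth case, a linear orthogonal $G_x$-representation) sets up an induction on the dimension of the slice, with the elementary linear cases treated directly.

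I expect the main obstacle to be exactly the continuity of the lift as $\omega$ crosses a singular orbit. Here one cannot simply invoke a local cross-section of $q$: such sections need not exist near singular orbits, as the orbit map $\C\to\C$, $z\mapsto z^2$, of $\Z/2$ acting by $z\mapsto -z$ already shows, since there is no continuous branch of the square root on a disc about the origin. The cone (respectively linear) structure of the slices is precisely what allows a continuous lift to be pushed through the singular stratum, and organising this reduction cleanly — rather than the formal loop-closing of the second paragraph — is where the real work lies.
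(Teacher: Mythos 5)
The paper offers no proof of this lemma --- it is quoted verbatim as Corollary 6.3 of Bredon's book --- so there is nothing internal to compare against, and your argument is exactly the standard one from that source: establish the path-lifting property of the orbit map $q\colon X\to X/G$ (Bredon's Theorem 6.2, via tubes and slices), lift a loop based at a point of a connected orbit, and close the lift up inside that fibre, whose image under $q$ is constant, so that $q_*$ hits the given class. This is correct in outline; the only ingredient you leave as a sketch is the path-lifting theorem itself, which you rightly identify as the real content and which is precisely what the cited section of Bredon supplies.
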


The characteristic map $\v\colon \{Q_1,\dots,Q_m\}\to \Hom(S^1,T)$ defines a homomorphism 
$T^m\to T$, denoted $\v$ again. Note that $\v(T^m)$ is a subtorus of $T$ of dimension $\rank\hN$, in particular, $\v$ is surjective if and only if $\rank \hN=\rank N$ (this is the case when $Q$ has a vertex). 
The product map $id\times \v\colon Q\times T^m\to Q\times T$ induces a continuous map 
\red{$${V}\colon \mathcal{Z}_Q=Q\times T^m/\sim_e\to Q\times T/\sim=X(Q,\v)=X$$
and it further induces an injective continuous map
$$ \bar{V} \colon \mathcal{Z}_Q/\ker\v\to X,$$
so that $\bar{V}$ is a homeomorphism if $\v$ is surjective since the spaces are compact and Hausdorff.}
\begin{prop} \label{prop:2-2}
If $Q$ has a vertex, then $\pi_*\colon \pi_1(X)\cong\pi_1(Q)$. 
\end{prop}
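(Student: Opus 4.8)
The plan is to realize $X$ as the quotient of $\mathcal{Z}_Q$ by the compact abelian group $K:=\ker\v$ and then transport the isomorphism of Lemma~\ref{lemm:1} across the orbit map. Since $Q$ has a vertex we have $\rank\hN=n$, so $\v\colon T^m\to T$ is surjective and the map $\bar V\colon \mathcal{Z}_Q/\ker\v\to X$ introduced above is a homeomorphism. Identifying $X=\mathcal{Z}_Q/K$ in this way, the map $V\colon \mathcal{Z}_Q\to X$ becomes the orbit map of the $K$-action on $\mathcal{Z}_Q$ induced from the $T^m$-action, and directly from the definitions one checks $\q\circ V=\kappa$, whence $\kappa_*=\q_*\circ V_*$ on fundamental groups.

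First I would reduce the proposition to the surjectivity of $V_*\colon \pi_1(\mathcal{Z}_Q)\to\pi_1(X)$. Granting this, $\q_*$ is surjective because $\kappa_*=\q_*\circ V_*$ is an isomorphism by Lemma~\ref{lemm:1}; and $\q_*$ is injective by an elementary diagram chase: if $\q_*(\alpha)=1$, write $\alpha=V_*(\beta)$ using surjectivity of $V_*$, so that $\kappa_*(\beta)=\q_*V_*(\beta)=\q_*(\alpha)=1$, and injectivity of $\kappa_*$ forces $\beta=1$, hence $\alpha=1$. Thus $\q_*$ is an isomorphism, which is exactly the assertion.

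To prove $V_*$ surjective I would apply Lemma~\ref{lemm:2} to the $K$-action on the arcwise connected manifold $\mathcal{Z}_Q$, for which it suffices to exhibit a single connected $K$-orbit. The natural candidate is the orbit lying over a vertex. Let $v$ be a vertex of $Q$ and let $Q_I$ (with $|I|=n$) be the smallest face containing it. The key claim is that the $K$-orbit through the class of $(v,1)$ coincides with the whole fiber $\kappa^{-1}(v)$, which is homeomorphic to $T^m/T^m_I\cong T^{m-n}$ and is therefore connected. Unwinding $V$, this orbit is $V^{-1}$ of the $T$-fixed point sitting over $v$, namely $\{[v,s]_e : \v(s)\in T_I\}$; since $T_I=\v(T^m_I)$, this equals $\{[v,s]_e : s\in K\cdot T^m_I\}=(K\cdot T^m_I)/T^m_I$. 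A dimension count then shows $K\cdot T^m_I=T^m$: the intersection $K\cap T^m_I=\ker(\v|_{T^m_I})$ is finite precisely because the $v_i$ with $i\in I$ are linearly independent (condition (2) in the definition of a characteristic function, applicable as $Q_I\neq\emptyset$), so $\dim(K\cdot T^m_I)=(m-n)+n-0=m$ and the orbit fills $\kappa^{-1}(v)$. Lemma~\ref{lemm:2} then yields the surjectivity of $V_*$, completing the argument.

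The main obstacle is exactly this last claim that the orbit over $v$ is connected; the rest is formal. The subtlety is that $K=\ker\v$ need not be connected when $X$ is a genuine orbifold (its group of components is $N/\hN$), so one cannot simply invoke the ``$G$ connected'' case of Lemma~\ref{lemm:2}. Identifying the orbit with the full torus fiber $\kappa^{-1}(v)$ circumvents this, the only input being the finiteness of $K\cap T^m_I$ guaranteed by the linear independence of the characteristic vectors indexed by $I$.
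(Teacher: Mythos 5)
Your argument is correct, and it takes a genuinely different route from the paper's at the one point where the two proofs must diverge, namely the surjectivity of $V_*\colon \pi_1(\mathcal{Z}_Q)\to\pi_1(X)$. The reduction of the proposition to that surjectivity via $\kappa_*=\q_*\circ V_*$ and Lemma~\ref{lemm:1} is the same in both. The paper then factors $V$ through the intermediate quotient $\hat X=\mathcal{Z}_Q/\ker\hat\v$ associated to the finite cover $\hat T\to T$ determined by $\hN\subset N$, and applies Lemma~\ref{lemm:2} twice: once to $\alpha\colon\mathcal{Z}_Q\to\hat X$, where the Theorem on Elementary Divisors shows $\ker\hat\v$ is connected, and once to $\beta\colon\hat X\to X$, where the finite group $\ker\rho$ has a fixed point because $Q$ has a vertex. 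You instead apply Lemma~\ref{lemm:2} once, directly to the possibly disconnected group $K=\ker\v$ acting on $\mathcal{Z}_Q$, by exhibiting a single connected orbit: the $K$-orbit over a vertex is $(K\cdot T^m_I)/T^m_I$ inside the fiber $T^m/T^m_I$, and since $K\cap T^m_I=\ker(\v|_{T^m_I})$ is finite by the linear independence of $(v_i)_{i\in I}$, a dimension count (plus the fact that a full-dimensional closed subgroup of a torus is the whole torus) gives $K\cdot T^m_I=T^m$, so the orbit is the full fiber $T^{m-n}$ and in particular connected. Your version is shorter and avoids both the elementary-divisor normalization of $\hat\v$ and the auxiliary space $\hat X$; what the paper's detour buys is the explicit intermediate orbifold $\hat X$ and the identification $X=\hat X/\ker\rho$, which are of independent interest, but they are not needed for this proposition.
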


\begin{proof}
We have a sequence
\[
\kappa_*=\pi_*\circ {V}_*\colon \pi_1(\mathcal{Z}_Q)\stackrel{{{V}}_*}\longrightarrow \pi_1(X) \stackrel{\pi_*}\longrightarrow \pi_1(Q).
\]
Since $\kappa_*$ is an isomorphism by Lemma~\ref{lemm:1}, it suffices to prove that ${V}_*$ is surjective. 

Since $Q$ has a vertex, $\rank \hN=\rank N$ and the homomorphism $\v\colon T^m\to T$ is surjective; \red{so the map $\bar{V}\colon \mathcal{Z}_Q/\ker\v\to X$ above is a homeomorphism.} 
Since $\hN$ is a sublattice of $N$ of finite index, there is a finite covering homomorphism $\rho\colon \hat T\to T$ corresponding to $\hN$, where $\hat T$ is also a compact connected abelian Lie group of dimension $n$ (precisely speaking, $\rho_*(\pi_1(\hat T))=\hN$ when $N$ is regarded as $\pi_1(T)$) and the characteristic function $v$ uniquely determines a characteristic function $\hat v\colon \{Q_1,\dots,Q_m\}\to \Hom(S^1,\hat T)$ such that $\rho_*(\hat v(Q_i))=\v(Q_i)$ for any $i$. Then we have 
\[
\hat X:=X(Q,\hat\v)=(Q\times\hat T)/\!\!\sim
\]
and $\hat \v$ induces a homomorphism $T^m\to \hat T$, denoted $\hat\v$
again similarly to $\v$, and $\hat X=\mathcal{Z}_Q/\ker\hat\v$.
Moreover, we have $X=\hat X/\ker\rho$. Namely, the quotient map
$V\colon \mathcal{Z}_Q\to X$ factors as the composition of two quotient maps 
\[
\mathcal{Z}_Q\stackrel{\alpha}\longrightarrow \mathcal{Z}_Q/\ker\hat\v=\hat X\stackrel{\beta}\longrightarrow \hat X/\ker\rho=X.
\]
\red{The} 
Theorem on Elementary Divisors (see Section~\ref{sect:4}) implies that
since $\hat\v(Q_i)$'s span $\hN$, the homomorphism $\hat\v\colon T^m\to
\hat T$ composed with a suitable automorphism of $T^m$ can be viewed as
a projection map if we take a suitable identification of $\hat T$ with
$T^n$; so $\ker\hat\v$ is connected and hence $\alpha_*\colon \pi_1(\mathcal{Z}_Q)\to \pi_1(\hat X)$ is surjective by Lemma~\ref{lemm:2}. The action of $\hat T$ on $\hat X$ has a fixed point since $Q$ has a vertex and $\ker\rho$ is contained in $\hat T$, so the action of $\ker\rho$ on $\hat X$ has a fixed point. Therefore $\beta_*\colon \pi_1(\hat X)\to \pi_1(X)$ is also surjective again by Lemma~\ref{lemm:2}. 
\end{proof}

\red{
\begin{rema}
As mentioned in the Introduction, even if $Q$ is a simple polytope, $X=\mathcal{Z}_Q/\ker\v$ is not necessarily a compact toric orbifold because the characteristic map $\v$ is not necessarily coming from primitive vectors of a complete simplicial fan.
\end{rema}} 


\begin{coro} \label{coro:2-1}
If $Q$ has a vertex and $H_1(Q)=H_2(Q)=0$, then $H^1(X)=0$ and $H^2(X)\cong \Z^{m-n}$. 
\end{coro}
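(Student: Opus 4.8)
The plan is to treat the two assertions separately: $H^1(X)=0$ will come straight out of the fundamental group computation, while $H^2(X)\cong\Z^{m-n}$ splits into a torsion‑freeness statement and a rank count.

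First, for $H^1(X)=0$. Since $Q$ has a vertex, Proposition~\ref{prop:2-2} gives $\pi_1(X)\cong\pi_1(Q)$, whence $H_1(X)=\pi_1(X)^{\mathrm{ab}}=\pi_1(Q)^{\mathrm{ab}}=H_1(Q)=0$. As $X$ is connected, $H_0(X)=\Z$ is free, so the universal coefficient theorem gives $H^1(X)\cong\Hom(H_1(X),\Z)\oplus\Ext(H_0(X),\Z)=0$. For the torsion‑freeness half of the second assertion, the same vanishing $H_1(X)=0$ kills the $\Ext$‑term, so $H^2(X)\cong\Hom(H_2(X),\Z)$ is automatically free abelian of rank $\rank H_2(X)$. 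It therefore remains only to prove $\rank H_2(X)=m-n$, i.e. $\dim_\Q H^2(X;\Q)=m-n$.

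For this rank count I would use that $X$ is a compact $2n$‑dimensional orbifold, hence a rational homology manifold, so that rational Poincar\'e duality gives $\dim_\Q H^2(X;\Q)=\dim_\Q H^{2n-2}(X;\Q)$. I would then read off the right‑hand side by re‑running the Mayer--Vietoris sequence \eqref{eq:2-3} from the proof of Proposition~\ref{prop:2-1} with $\Q$‑coefficients. There $H^{2n-2}(X)\cong H_2(X\setminus X')$ by Lemma~\ref{lemm:2-1}, and the exact sequence yields $\rank H_2(X\setminus X')=\rank\ker f_1+\rank\coker f_2$. Rationally $\ker\varphi_1$ is spanned by the classes $c_iv_i$, one $\Q v_i$ per facet, so it has dimension $m$; since $\psi_1$ sends it onto $\hN\otimes\Q$, which has rank $n$ because $Q$ has a vertex, we get $\rank\ker f_1=m-n$. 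For $\coker f_2$ the decisive point is that $H_1(Q)=0$ forces $\psi_2$ to annihilate every facet contribution of the form $H_1(Q_i)\otimes\langle v_i\rangle$, as that part of $\psi_2$ factors through $H_1(Q)\otimes H_1(T)=0$; consequently $\psi_2(\ker\varphi_2)=\hN\wedge N\otimes\Q=\wedge^2N\otimes\Q$ and $\coker f_2\otimes\Q=0$. This yields $\dim_\Q H^{2n-2}(X;\Q)=m-n$ and finishes the count.

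The main obstacle, and the reason this is not an immediate consequence of Proposition~\ref{prop:2-1}, is exactly this rank count: Proposition~\ref{prop:2-1} computes $H^{2n-2}(X)$ only under the stronger hypothesis $H_1(Q_i)=0$ for all facets, which is \emph{not} assumed here, so I cannot simply quote its formula. The step needing the most care is verifying, with $\Q$‑coefficients and using only $H_1(Q)=H_2(Q)=0$, that the facet terms $H_1(Q_i)$ genuinely drop out of both $\ker f_1$ and $\coker f_2$, leaving rank exactly $m-n$; a secondary point is justifying rational Poincar\'e duality for the possibly singular orbifold $X$ and separately checking the degenerate case $n=1$, where $X\cong S^2$ and $m=2$ so that $\Z^{m-n}=\Z$. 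As a cross‑check one could instead compute $\dim_\Q H^2(X;\Q)$ directly via the Borel construction for the torus $\ker\hat v$ acting on $\mathcal{Z}_Q$ with finite stabilizers, but that route requires establishing the rational $2$‑connectivity of $\mathcal{Z}_Q$, so I expect the Poincar\'e‑duality argument to be the cleaner one.
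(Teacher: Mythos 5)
Your proof is correct and follows the same overall route as the paper's: Proposition~\ref{prop:2-2} together with the universal coefficient theorem gives $H^1(X)=0$ and the torsion-freeness of $H^2(X)$, and rational Poincar\'e duality for the orbifold reduces the remaining rank count to computing $\rank H^{2n-2}(X)$. The one place you genuinely diverge is that rank count. The paper simply quotes Proposition~\ref{prop:2-1} and its remark to conclude $\rank H^{2n-2}(X)=m-n$, even though the displayed formula for $H^{2n-2}(X)$ there is proved under the additional hypothesis $H_1(Q_i)=0$ for every facet, which Corollary~\ref{coro:2-1} does not assume. You correctly flag this mismatch and close it by re-running the Mayer--Vietoris sequence \eqref{eq:2-3} with $\Q$-coefficients: over $\Q$ the facet terms $H_1(Q_i;\Q)$ contribute neither to $\ker f_1$ (whose rank is $m-n$ by rank--nullity applied to $\psi_1$ restricted to $\bigoplus_i\Q v_i$, using that $\rank\hN=n$ when $Q$ has a vertex) nor to $\coker f_2\otimes\Q$ (which vanishes because $\psi_2$ sends $H_1(Q_i)\otimes\langle v_i\rangle$ into $H_1(Q)\otimes H_1(T)=0$ while $\hN\wedge N$ already has full rank in $\wedge^2N$). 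Both computations are correct, so your argument is, if anything, more careful than the paper's; the separate check of the degenerate case $n=1$ is also a reasonable precaution that the paper omits.
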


\begin{proof}
By Proposition~\ref{prop:2-2}, $\pi_1(X)\cong\pi_1(Q)$ and hence $H_1(X)\cong H_1(Q)$. Therefore $H_1(X)=0$ since $H_1(Q)=0$ by assumption and hence $H^1(X)=0$ and $H^2(X)$ has no torsion by the universal coefficient theorem. On the other hand, since $X$ is an orbifold, Poincar\'e duality holds with $\Q$-coefficients. Therefore the rank of $H^2(X)$ is equal to that of $H^{2n-2}(X)$, that is $m-n$ by Proposition~\ref{prop:2-1} and its subsequent remark. 
\end{proof}

\section{Low dimensional cases} \label{sect:2-2}

A nice manifold with corners $Q$ is called \emph{face-acyclic} (\cite{ma-pa06}) if every face of $Q$ (even $Q$ itself) is acyclic. 
\red{We note that if $Q$ is face-acyclic, then $Q$ must have a vertex. Indeed, let $F$ be a face of $Q$ of minimum dimension. Then $F$ has no boundary because the boundary of $F$ must consist of faces of smaller dimensions, so $F$ is a closed manifold. But since $F$ is acyclic, this means that $F$ is a point. Therefore $Q$ has a vertex.}

We shall apply the previous results when $Q$ is face-acyclic and $n=\dim Q$ is $2$ or $3$. The following corollary follows from Proposition~\ref{prop:2-1} and Corollary~\ref{coro:2-1}. 

\begin{coro} \label{coro:2-2}
Suppose that $Q$ is face-acyclic and $\dim Q=2$, that is, $Q$ is an $m$-gon $(m\ge 2)$. Then we have 
\[
H^j(X)\cong \begin{cases} \Z \quad&(j=0,4)\\
\Z^{m-2} \quad &(j=2)\\
N/\hat N \quad &(j=3)\\
0\quad&\text{\rm{(otherwise)}}.
\end{cases}
\]
\end{coro}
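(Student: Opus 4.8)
The plan is to specialize the general computations of Proposition~\ref{prop:2-1} and Corollary~\ref{coro:2-1} to the case $n=2$, exploiting the strong hypothesis that $Q$ is face-acyclic. First I would record what face-acyclicity buys us when $\dim Q=2$: every face of $Q$, including $Q$ itself, is acyclic, so $H_1(Q)=H_2(Q)=0$ and $H_1(Q_i)=0$ for each facet $Q_i$. As noted after the definition of face-acyclic, $Q$ must then have a vertex, so Proposition~\ref{prop:2-2} applies and gives $\pi_1(X)\cong\pi_1(Q)$. Since each $Q_i$ is a facet of a $2$-dimensional face-acyclic manifold with corners, it is an acyclic $1$-manifold with boundary, hence an arc; combined with the fact that $Q$ is an $m$-gon, this identifies the combinatorial picture completely. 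The remark after Corollary~\ref{coro:2-1} is what makes $n=2$ special: it records that $\rank\hN=n=2$ and that the top exterior term $\wedge^2 N/\hN\wedge N$ vanishes.

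Next I would assemble the degrees one at a time. The degrees $j=0$ and $j=2n=4$ are handled by Poincar\'e duality for the orbifold together with $H^{2n}(X)\cong\Z$ from Proposition~\ref{prop:2-1}. For the top range, Proposition~\ref{prop:2-1} directly gives $H^{2n-1}(X)=H^3(X)\cong H_1(Q)\oplus N/\hN$, which collapses to $N/\hN$ since $H_1(Q)=0$; and $H^{2n-2}(X)=H^2(X)$ is computed either from the displayed formula in Proposition~\ref{prop:2-1} — all of $H_2(Q)$, $H_1(Q)\otimes H_1(T)$, and $\wedge^2 N/\hN\wedge N$ vanish, leaving $\Z^{m-\rank\hN}=\Z^{m-2}$ — or more cleanly from Corollary~\ref{coro:2-1}, whose hypotheses $H_1(Q)=H_2(Q)=0$ are met, giving $H^1(X)=0$ and $H^2(X)\cong\Z^{m-n}=\Z^{m-2}$ at once. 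The vanishing of $H^1(X)$ also follows from $\pi_1(X)\cong\pi_1(Q)$ and $H_1(Q)=0$.

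The remaining degree is $j=1$, where I would invoke Corollary~\ref{coro:2-1} to get $H^1(X)=0$; this is the only place the fundamental group computation is genuinely needed, since $H^3$ and $H^4$ come from the general proposition and $H^2$ from the rank count. To finish, I would note that for $n=2$ the only degrees in $\{0,1,2,3,4\}$ are exactly those listed, and all other degrees vanish for dimension reasons.

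I do not expect a serious obstacle here: the corollary is essentially a bookkeeping exercise that packages the four relevant degrees from results already proved. The one point requiring a moment of care is confirming that the hypotheses of the cited results are actually satisfied — namely that face-acyclicity in dimension $2$ forces $H_1(Q)=H_2(Q)=0$ and $H_1(Q_i)=0$, and that $Q$ has a vertex so that Corollary~\ref{coro:2-1} is applicable. Once these verifications are in place, every entry in the table reads off directly from Proposition~\ref{prop:2-1}, Corollary~\ref{coro:2-1}, and the remark recording $\wedge^2 N/\hN\wedge N=0$ when $n=2$.
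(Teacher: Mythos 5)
Your proposal is correct and follows essentially the same route as the paper, which simply derives the corollary from Proposition~\ref{prop:2-1} and Corollary~\ref{coro:2-1}; your verifications that face-acyclicity in dimension $2$ gives $H_1(Q)=H_2(Q)=0$, $H_1(Q_i)=0$, the existence of a vertex, and the vanishing of $\wedge^2 N/\hN\wedge N$ are exactly the implicit steps. (Only a trivial slip: the observation that $\wedge^2 N/\hN\wedge N=0$ for $n=2$ is the remark following Proposition~\ref{prop:2-1}, not Corollary~\ref{coro:2-1}.)
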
 

\begin{exam}
Let $a$ be a positive integer. Take $Q$ to be a 2-simplex, $N=\Z^2$ and 
\[
v_1=(2a,1),\ v_2=(0,1),\ v_3=(-a,-1).
\]
Then $\hat N=\langle ae_1,e_2\rangle$ and $N/\hat N\cong\Z/a$. 
The space $X$ is not a weighted projective space \brown{when} $a\ge 2$ since it has torsion in cohomology, where $\{e_1,e_2\}$ denotes the standard base of $\Z^2$ as before. 
\end{exam}

\begin{coro} \label{coro:2-3}
Suppose that $Q$ is face-acyclic and $\dim Q=3$. Then 
\[
H^j(X)\cong \begin{cases} \Z \quad&(j=0,6)\\
\Z^{m-3} \quad &(j=2)\\
0 \text{ or some torsion group} \quad &(j=3)\\
\Z^{m-3}\oplus \wedge^2N/(\hat N\wedge N)\quad &(j=4)\\
N/\hat N \quad &(j=5)\\
0\quad&\text{\rm{(otherwise)}}.
\end{cases}
\]
\end{coro}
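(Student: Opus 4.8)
The plan is to read off $H^j(X)$ degree by degree, noting that every degree except $j=3$ is delivered directly by Proposition~\ref{prop:2-1} and Corollary~\ref{coro:2-1}; the only real work is to show that $H^3(X)$ carries no free part. First I would record what face-acyclicity buys us. As observed just before the statement, a face-acyclic $Q$ has a vertex, so $\rank\hN=3$; moreover $H_1(Q)=H_2(Q)=0$ and $H_1(Q_i)=0$ for every facet $Q_i$. Substituting these into Proposition~\ref{prop:2-1} gives $H^6(X)\cong\Z$, $H^5(X)\cong N/\hN$ and $H^4(X)\cong\Z^{m-3}\oplus(\wedge^2N/\hN\wedge N)$, while Corollary~\ref{coro:2-1} gives $H^1(X)=0$ and $H^2(X)\cong\Z^{m-3}$. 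Since $X$ is connected $H^0(X)\cong\Z$, and $H^j(X)=0$ for $j>6$ because $\dim X=6$. Thus only $j=3$ remains.

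For $j=3$ the assertion is exactly that $\rank H^3(X)=0$, i.e.\ that $H^3(X)$ is torsion (possibly zero). Here I would use that $X$, being a compact orbifold of dimension $6$, is a closed rational homology manifold, so Poincar\'e duality holds over $\Q$ (as already invoked in the proof of Corollary~\ref{coro:2-1}). The groups $\wedge^2N/\hN\wedge N$ and $N/\hN$ are finite, since $\hN$ has finite index in $N$, so the rational Betti numbers found above are $b_0=b_6=1$, $b_1=b_5=0$ and $b_2=b_4=m-3$; hence $\chi(X)=2+2(m-3)-b_3=2m-4-b_3$, and it suffices to prove $\chi(X)=2m-4$. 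I would compute $\chi(X)$ by stratifying $X$ over the faces of $Q$ via the quotient map $\pi\colon X\to Q$: over the relative interior of a $d$-dimensional face the fibre of $\pi$ is a $d$-torus, whose compactly supported Euler characteristic vanishes unless $d=0$, so $\chi(X)$ equals the number $f_0$ of vertices of $Q$ (equivalently $\chi(X)=\chi(X^T)=f_0$, since $X^T$ consists of the points over the vertices). It then remains to verify the combinatorial identity $f_0=2m-4$. Since $Q$ is an acyclic $3$-manifold with corners, Lefschetz duality shows $\partial Q$ is a homology $2$-sphere, so $f_0-f_1+f_2=\chi(\partial Q)=2$, where $f_d$ denotes the number of $d$-dimensional faces of $Q$; each facet is a polygon and hence has as many vertices as edges, and summing this over all facets together with the incidence counts \lq\lq three facets through each vertex, two facets along each edge" gives $3f_0=2f_1$. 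Eliminating $f_1$ and using $f_2=m$ yields $f_0=2m-4$, whence $b_3=0$ and $H^3(X)$ is torsion, as claimed.

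The routine degrees are immediate, so the substance of the argument lies entirely in pinning down $\rank H^3(X)$, and there the crux is the Euler-characteristic bookkeeping $\chi(X)=2m-4$. This splits into the topological input $\chi(X)=f_0$ and the purely combinatorial relation $f_0=2m-4$, the latter being the more delicate point: the incidence counts behind $3f_0=2f_1$ rely on niceness of $Q$ (locally $(\R_{\ge 0})^3$ near a vertex and $\R\times(\R_{\ge 0})^2$ along an edge) to guarantee that exactly three \emph{distinct} facets meet at each vertex and exactly two \emph{distinct} facets meet along each edge, so one must check that the local branches of $\partial Q$ lie on different facets rather than on a single self-touching one.
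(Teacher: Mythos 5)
Your proposal is correct and follows essentially the same route as the paper: all degrees except $j=3$ are read off from Proposition~\ref{prop:2-1} and Corollary~\ref{coro:2-1}, and the vanishing of $\rank H^3(X)$ is obtained from rational Poincar\'e duality together with the Euler characteristic computation $\chi(X)=\chi(X^T)=V=2m-4$, the combinatorial identity being the same Euler-formula bookkeeping (your $3f_0=2f_1$ is the paper's ``the number of edges is $3V/2$''). The only cosmetic difference is that you offer the compactly supported stratification count as an alternative to the fixed-point theorem the paper cites from Bredon.
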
 

\begin{proof}
Since $Q$ is face-acyclic, 
\red{$Q$ has a vertex as remarked at the beginning of this section;} so all the statements except for $j=3$ follows from Proposition~\ref{prop:2-1} and Corollary~\ref{coro:2-1}. In order to prove the statement for $j=3$, it suffices to show $H^3(X;\Q)=0$ and this is equivalent to showing that the euler characteristic of $X$ is $2m-4$ (note that we know the rank of $H^j(X)$ except for $j=3$). \\
\indent
Since $Q$ is face-acyclic and of dimension 3, the boundary of $Q$ is a 2-sphere, every 2-face of $Q$ is a 2-disk and the number of 2-faces is $m$ by definition. Let $V$ be the number of vertices of $Q$. Then the number of edges of $Q$ is $3V/2$ and hence we obtain an identity $V-3V/2+m=2$ by Euler's formula, which implies $V=2m-4$. On the other hand, it is known that the euler characteristic of $X$ is equal to that of \brown{the} $T$-fixed point set $X^T$ (see \cite[Theorem 10.9 in p.163]{bred72}). In our case $X^T$ is isolated and corresponds to the vertices of $Q$. Therefore, the euler characteristic of $X$ is \brown{equal to} $V$, that is $2m-4$. 
\end{proof}

\begin{exam}
It happens that $\hN\wedge N=\wedge^2 N$ even if $\hN\not=N$. For instance, take $Q$ to be a 3-simplex, $N=\Z^3$ and 
\[
v_1=(0,0,1),\ v_2=(2,0,1),\ v_3=(0,1,1),\ v_4=(-2,-1,-1).
\]
Then 
\[
\hN=\langle 2e_1,e_2,e_3\rangle,\quad \hN\wedge N=\langle e_1\wedge e_2, e_1\wedge e_3, e_2\wedge e_3\rangle=\wedge^2 N,
\]
where $\{e_1,e_2,e_3\}$ denotes the standard base of $\Z^3$. 
\end{exam}

Corollary~\ref{coro:2-3} says that if $\hN=N$, then $H^j(X)$ has no torsion except $j=3$. However, $H^3(X)$ can be nontrivial (so, a nontrivial torsion group) \brown{when} $\hN=N$. We shall give such an example below. One can also find many such examples using \brown{Maple package} torhom. 
\begin{exam}
Let $a$ be a positive integer and take the following five primitive vectors in $\Z^3$:
\[
\begin{split}
&v_+=(0,0,1),\\ 
&v_1=(2a,1,0),\ v_2=(0,1,0),\ v_3=(-a,-1,0),\\
& v_-=(1,0,-1).
\end{split}
\]
Then $\hN=N$. We consider the complete simplicial fan $\Delta$ having the following six 3-dimensional cones 
\[
\angle v_+v_1v_2,\ \angle v_+v_1v_3,\ \angle v_+v_2v_3,\ \angle v_-v_1v_2,\ \angle v_-v_1v_3,\ \angle v_-v_2v_3
\]
where $\angle v_\epsilon v_iv_j$ ($\epsilon\in\{+,-\}$, $i,j\in\{1,2,3\}$) denotes the cone spanned by $v_\epsilon,v_i$ and $v_j$. Let $X$ be the compact simplicial toric variety associated to the fan $\Delta$. Let $\rho$ be the projection of $\R^3$ on the line $\R$ corresponding to the last coordinates of $\R^3$. Then the vectors $\v_1,\v_2,\v_3$ are in the kernel of $\rho$ and $\rho(\v_\pm)$ are primitive vectors and \brown{determine} the complete 1-dimensional fan. This means that we have a fibration $F\to X\to \C P^1$ where the fiber $F$ is the compact simplicial toric variety associated to the fan obtained by projecting the fan $\Delta$ on the plane $\R^2$ corresponding to the first two coordinates of $\R^3$. The $E_2$-terms of the Serre spectral sequence of the fibration are 
\[
E_2^{p,q}=H^p(\C P^1;H^q(F))
\]
and $E_2^{p,q}=0$ unless $p=0,2$ and $q=0,2,3,4$ by Corollary~\ref{coro:2-2}. Therefore all the differentials except 
\[
d_2^{0,3}\colon E_2^{0,3}\to E_2^{2,2}\quad{\text{and}}\quad d_2^{0,4}\colon E_2^{0,4}\to E_2^{2,3}
\]
are trivial. Here, $E_2^{0,3}=H^0(\C P^1;H^3(F))=H^3(F)$ is trivial or a torsion group by Corollary~\ref{coro:2-2} while $E_2^{2,2}=H^2(\C P^1;H^2(F))=H^2(F)$ is a free abelian group again by Corollary~\ref{coro:2-2}, so $d_2^{0,3}$ must be trivial. Therefore $E_2^{0,3}=E_\infty^{0,3}$. Since $E_2^{p,q}$ with $p+q=3$ vanishes unless $(p,q)=(0,3)$, we obtain an isomorphism $H^3(X)\cong H^3(F)$. Here $H^3(F)\cong\Z/a$ again by Corollary~\ref{coro:2-2} (see Example after Corollary~\ref{coro:2-2}) and hence we have $H^3(X)\cong\Z/a$. On the other hand, since $\hN=N$ as remarked above, $H^j(X)$ has no torsion for $j\not=3$ by Corollary~\ref{coro:2-3}. 
\end{exam}

\section{A necessary condition for no $p$-torsion} \label{sect:3}

Let $I$ be a subset of $[m]$ with $Q_I\not=\emptyset$. Although $Q_I$ is not necessarily connected, we understand that $Q_I$ stands for a connected component of $Q_I$ in this section for notational convenience. Then the characteristic function $\v$ associates a characteristic function $\v_I$ on $Q_I$ as follows. Since $v_i$'s $(i\in I)$ are linearly independent over $\Q$, they span a $|I|$-dimensional linear subspace of $N\otimes \R$ and its intersection with $N$ is a rank $|I|$ sublattice of $N$, denoted $N_I$. Then $N(I):=N/N_I$ is a free abelian group of rank $n-|I|$ and we denote the projection map from $N$ to $N(I)$ by $\pi_I$. \blue{If $Q_I\cap Q_j$ is nonempty for $j\in [m]\backslash I$,} then its connected components are facets of $Q_I$, and any facet of $Q_I$ is of this form. The element $\pi_I(v_j)\in N(I)$ is not necessarily primitive and we define $\v_I(Q_I\cap Q_j)$ to be the primitive vector in $N(I)$ which has the same direction as $\pi_I(v_j)$, where $Q_I\cap Q_j$ also stands for a connected component of $Q_I\cap Q_j$. Then one can see that $\v_I$ is a characteristic function on $Q_I$. Similarly to $\hN$, one can define a sublattice $\hN(I)$ of $N(I)$ using $\v_I$. We allow $I=\emptyset$ and understand $Q_\emptyset=Q$, $N(\emptyset)=N$ and $\hN(\emptyset)=\hN$. We define 
\[ 
\m(Q_I):=
\begin{cases} 
|N(I)/\hN(I)|\quad&\text{when $Q_I\not=\emptyset$},\\
1\quad&\text{when $Q_I=\emptyset$}.
\end{cases}
\] 
Here $|N(I)/\hN(I)|$ is not necessarily finite. For instance, take
$Q=S^1\times [-1,1]$ and assign characteristic vectors $(1,0)$ and
$(-1,0)$ to the facets $S^1\times \{1\}$ and $S^1\times \{-1\}$
respectively. Then $N/\hN$ is an infinite cyclic group and hence
$|N(I)/\hN(I)|$ is infinite for $I=\emptyset$. One can easily construct
a similar example such that $|N(I)/\hN(I)|$ is infinite for some
$I\not=\emptyset$.

\begin{rema}
When $|I|=n$, $N(I)=\{0\}$; so $\m(Q_I)=1$. When $|I|=n-1$, $N(I)$ is
of rank one and $\hN(I)$ is generated by a primitive vector; so
$\hN(I)=N(I)$ and hence $\m(Q_I)=1$ in this case too. Another case
which ensures $\m(Q_I)=1$ is the following. Let $q$ be a vertex of
$Q$. Then there is a subset $J$ of $[m]$ with $|J|=n$ such that $q\in Q_J$. If $\{v_j\}_{j\in J}$ is a base of $N$, then $\mu(Q_I)=1$ for every subset $I$ of $J$, which easily follows from the definition of $\m(Q_I)$. 
\end{rema}

We note that for a prime number $p$, $H^*(X(Q,\v);\Z)$ has no $p$-torsion if and only if $H^{odd}(X(Q,\v);\Z/p)=0$, which follows from the universal coefficient theorem (see \cite[Corollary 56.4]{munk84}). 

\red{\begin{lemm}\label{lemm:3-0}\cite[Theorem 2.2 on pp.376-377]{bred72}. 
Let a group $G$ of prime order $p$ act on a finite dimensional space $X$ with $A\subset X$ closed and invariant. Suppose that $G$ acts trivially on $H^*(X,A;\Z)$. Then 
\[\sum_{i\geq 0}{\rm rk}~H^{k+2i}(X^G,X^G\cap A;\Z/p)\leq\sum_{i\geq0}{\rm rk}~H^{k+2i}(X,A;\Z/p).\]
\end{lemm}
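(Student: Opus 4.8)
The plan is to prove this by the classical Smith theory of $\Z/p$-actions, working throughout with $\Z/p$-coefficients and invoking the \emph{integral} triviality hypothesis only at the final bookkeeping step. Write $G=\langle\sigma\rangle\cong\Z/p$ and let $R=\Z/p[G]\cong\Z/p[t]/(t^p)$ with $t=\sigma-1$; set $\rho=\sigma-1$ and $\tau=1+\sigma+\cdots+\sigma^{p-1}=\rho^{\,p-1}$. The two algebraic facts I would record first are $\rho\tau=\tau\rho=0$ and, on any free $R$-module, $\ker\rho=\operatorname{im}\tau$ and $\ker\tau=\operatorname{im}\rho$ (that is, $R$ is a Frobenius local ring). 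The single geometric input is that, after an equivariant triangulation of the pair (or via the standard singular-theory device used by Bredon), the relative cochain complex $C^*(X,A;\Z/p)$ is a complex of $R$-modules whose quotient by the subcomplex $C^*(X^G,X^G\cap A;\Z/p)$ consists of \emph{free} $R$-modules, since $G$ acts freely away from $X^G$.

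Next I would build the two Smith--Gysin exact sequences. Feeding the Frobenius identities into the short exact sequences $0\to\operatorname{im}\rho\to C^*\to\operatorname{im}\tau\to0$ (and its analogue with $\rho,\tau$ interchanged) on the free quotient, and splicing in the fixed set via $0\to C^*(X^G,X^G\cap A)\to C^*(X,A)\to C^*(X,X^G\cup A)\to0$, produces cochain complexes whose cohomologies are the ``special'' Smith groups $H^*_\rho$ and $H^*_\tau$, together with long exact sequences interrelating $H^*(X,A;\Z/p)$, $H^*(X^G,X^G\cap A;\Z/p)$, $H^*_\rho$ and $H^*_\tau$, in which each Smith operation raises degree by exactly one. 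Pure rank-counting through these sequences already yields the \emph{total} inequality $\sum_j\operatorname{rk}H^j(X^G,X^G\cap A;\Z/p)\le\sum_j\operatorname{rk}H^j(X,A;\Z/p)$.

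The delicate point, and the main obstacle, is to refine this to each parity class $\{k+2i\}$ separately, i.e.\ to obtain the stated degree-$2$-periodic inequality rather than merely the total one. This is exactly where the hypothesis that $G$ act trivially on the \emph{integral} cohomology $H^*(X,A;\Z)$, and not merely mod $p$, is essential: it forces the connecting homomorphisms of the Smith sequences to respect a degree-two periodicity (reflecting the even-concentration of $H^*(BG;\Z)$ for every prime $p$) rather than just the degree-one shift of the Smith operations, so that contributions to $H^{k+2i}(X^G,X^G\cap A)$ can only be matched against classes of the same parity in $H^*(X,A)$. Without this integral hypothesis the refinement genuinely fails, as the reflection action of $\Z/2$ on $S^1$ already shows (there $G$ acts by $-1$ on $H^1(S^1;\Z)$, and the even-degree inequality $2=\operatorname{rk}H^0(S^0)\le\operatorname{rk}H^0(S^1)=1$ is false). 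I would therefore carry out the rank count keeping track simultaneously of the degree-one Smith maps and the degree-two integral periodicity, and sum the resulting relations over $i$ to get $\sum_{i\ge0}\operatorname{rk}H^{k+2i}(X^G,X^G\cap A;\Z/p)\le\sum_{i\ge0}\operatorname{rk}H^{k+2i}(X,A;\Z/p)$ for each $k$.

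As an alternative derivation I would note that the same inequality follows from the Borel construction: the Serre spectral sequence of $(X,A)\to(X_G,A_G)\to BG$ has $E_2=H^*(BG)\otimes H^*(X,A;\Z/p)$ by the triviality hypothesis, and the localization theorem identifies the localized equivariant cohomology with $H^*(BG)\otimes H^*(X^G,X^G\cap A;\Z/p)$. Comparing Poincar\'e series as modules over the even-degree polynomial part of $H^*(BG)$ then gives the parity-refined bound directly, the main work once again being to control the degree-one exterior generator, which is precisely what the integral hypothesis suppresses.
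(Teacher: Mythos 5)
The first thing to note is that the paper does not prove this statement at all: it is quoted verbatim from Bredon's book (Theorem 2.2 on pp.~376--377 of \cite{bred72}) and used as a black box in the proof of Proposition~\ref{prop:3-1}. So there is no in-paper argument to compare yours against; the only question is whether your outline would actually establish the result. Your framework is the standard one and its ingredients are correct: the Frobenius structure of $\Z/p[G]$, the freeness of the relative cochains away from $X^G$, the two Smith--Gysin sequences, the total-rank inequality they yield, and the reflection on $S^1$ as the example showing that the integral triviality hypothesis cannot be dropped for $p=2$.

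The genuine gap is at the step you yourself flag as ``the delicate point'': passing from the total inequality $\sum_{j\ge k}\operatorname{rk}H^j(X^G,X^G\cap A;\Z/p)\le\sum_{j\ge k}\operatorname{rk}H^j(X,A;\Z/p)$ to the parity-refined one, and neither of your two routes, as written, delivers it. The rank count through the Smith sequences chains inequalities of the shape $\operatorname{rk}H^n_{\bar\rho}+f_n\le a_n+\operatorname{rk}H^{n+1}_{\rho}$, and alternating $\rho$ and $\tau$ still consumes every degree $n\ge k$ rather than every second one; the assertion that the integral hypothesis ``forces the connecting homomorphisms to respect a degree-two periodicity'' is essentially a restatement of the conclusion, not a derivation of it. Likewise, in the Borel-construction route the localization isomorphism gives $\operatorname{rk}H^n_G(X^G,X^G\cap A;\Z/p)=\sum_{t\le n}f_t$ for $n$ large, because $\operatorname{rk}H^s(BG;\Z/p)=1$ for every $s\ge 0$, odd and even alike; so a direct comparison of Poincar\'e series separates nothing by parity, and one must analyse $H^*_G(X,A;\Z/p)$ as a module over $\Lambda(s)\otimes\Z/p[u]$ and control the degree-one generator $s$ --- which is exactly the work you defer. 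Since the lemma is an attributed classical theorem, citing Bredon (as the paper does) is legitimate; but as a self-contained proof your text establishes only the unrefined inequality.
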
}
\begin{prop} \label{prop:3-1}
If $H^{odd}(X(Q,\v);\Z/p)=0$, then $H_1(Q_I;\Z/p)=0$ and $\m(Q_I)$ is finite and coprime to $p$ for every $I$.
\end{prop}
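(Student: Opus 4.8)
The plan is to reduce the statement, for each fixed $I$, to the single assertion $H^{\mathrm{odd}}(X(Q_I,\v_I);\Z/p)=0$, and then to prove this vanishing by Smith theory. First I would record what the two conclusions mean cohomologically. Write $X_I:=X(Q_I,\v_I)$, a space of the same type over the connected nice manifold with corners $Q_I$ (a chosen component), of dimension $2(n-|I|)$. Proposition~\ref{prop:2-1} applies to $X_I$ with no hypothesis on its faces and gives $H^{2(n-|I|)}(X_I)\cong\Z$ together with $H^{2(n-|I|)-1}(X_I)\cong H_1(Q_I)\oplus N(I)/\hN(I)$. Since $H^{2(n-|I|)}(X_I)$ and $H_0(Q_I)$ are free, the universal coefficient theorem yields $H^{2(n-|I|)-1}(X_I;\Z/p)\cong \bigl(H_1(Q_I)\oplus N(I)/\hN(I)\bigr)\otimes\Z/p$ and $H_1(Q_I;\Z/p)\cong H_1(Q_I)\otimes\Z/p$. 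Because a finitely generated abelian group $A$ satisfies $A\otimes\Z/p=0$ exactly when $A$ is finite of order prime to $p$, the conjunction of the two desired conclusions is \emph{equivalent} to $H^{2(n-|I|)-1}(X_I;\Z/p)=0$. Hence it suffices to prove $H^{\mathrm{odd}}(X_I;\Z/p)=0$ for every $I$. The case $I=\emptyset$ is precisely Proposition~\ref{prop:2-1} applied to $X$ itself, and the cases $|I|\ge n-1$ are immediate from the Remark after the definition of $\m(Q_I)$ (there $\m(Q_I)=1$ and $Q_I$ is a point or an interval).

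For the remaining faces I would invoke Lemma~\ref{lemm:3-0}. The key geometric observation is that $X_I$ is the fixed-point set $X^{T_I}$ of the subtorus $T_I\subset T$ generated by the $v_i(S^1)$ with $i\in I$: a point of $X$ is fixed by all of $T_I$ exactly when its stabilizer $T_J$ contains $T_I$, i.e.\ when $J\supseteq I$, so $X^{T_I}=\q^{-1}(Q_I)=X_I$. Choose a subgroup $G\cong\Z/p$ inside $T_I$; then $X_I=X^{T_I}\subseteq X^G$. Since $T$ is connected it acts trivially on $H^*(X;\Z)$, hence so does $G$, and Lemma~\ref{lemm:3-0} with $A=\emptyset$ and $k$ odd gives $H^{\mathrm{odd}}(X^G;\Z/p)=0$; consequently every connected component of $X^G$ has vanishing odd mod-$p$ cohomology. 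If $X_I$ happens to be a full component of $X^G$, we are done.

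The main obstacle is that $X_I$ need not be a full component of $X^G$: extra fixed points can accumulate onto $X_I$ from larger faces $Q_J\supsetneq Q_I$, and this occurs precisely when $G\subset T_J$ for such a $J$. Whether this can be avoided is a linear-algebra question in $N_I/pN_I\cong(\Z/p)^{|I|}$ about dodging the hyperplanes cut out by the subtori $T_J$, and it cannot always be arranged (for $p=2$ and $|I|=3$ one can take $v_1\equiv v_2\equiv v_3\pmod 2$ so that these hyperplanes cover all of $(\Z/2)^3$). I would therefore argue by induction on $\dim X$: when $X_I$ is not already a component, let $Y$ be the component of $X^G$ containing it. Then $Y$ has strictly smaller dimension (the fixed set of the order-$p$ subgroup of a coordinate circle has real codimension at least two), it inherits $H^{\mathrm{odd}}(Y;\Z/p)=0$ from the previous paragraph, and $X_I$ reappears as a face-preimage inside $Y$; the inductive hypothesis then gives $H^{\mathrm{odd}}(X_I;\Z/p)=0$. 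The step I expect to require the most care is exactly this structural claim: that each connected component of $X^G$ is again of the form $X(Q',\v')$ over a connected nice manifold with corners $Q'$ (so that Proposition~\ref{prop:2-1} and the induction apply), and that $Q_I$ sits inside it as a face. This amounts to analysing the local model of $X$ over the subposet $\{Q_J:G\subset T_J\}$ of faces, and it is the heart of the argument.
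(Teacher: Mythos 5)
Your reduction of the statement to $H^{\mathrm{odd}}(X_I;\Z/p)=0$ via Proposition~\ref{prop:2-1} and universal coefficients is exactly what the paper does, and you have correctly diagnosed the obstacle: for a single subgroup $G\cong\Z/p$ of $T_I$ one generally has $X^G\supsetneq X^{T_I}$, and no choice of $G$ need avoid this. The problem is your proposed repair. The ``structural claim'' on which your induction rests --- that the connected component $Y$ of $X^G$ containing $X_I$ is again a space $X(Q',\v')$ over a connected nice manifold with corners, with $Q_I$ a face of $Q'$ --- is false in general, and your own example shows it: if $p=2$ and $v_1\equiv v_2\pmod 2$ with $Q_1\cap Q_2\neq\emptyset$, then $G=\{1,v_1(-1)\}=\{1,v_2(-1)\}$ lies in both $T_{\{1\}}$ and $T_{\{2\}}$, so the component of $X^G$ through $X_{\{1\}}$ contains $X_{\{1\}}\cup X_{\{2\}}$, two codimension-two pieces glued along $X_{\{1,2\}}$; this is not the preimage of a face and not of the form $X(Q',\v')$, so neither Proposition~\ref{prop:2-1} nor your inductive hypothesis applies to it. (There are also secondary issues: such a $Y$ need not even be a union of sets $X_J$ with $Q_I\subset Q_J$, and knowing $H^{\mathrm{odd}}(Y;\Z/p)=0$ for a bigger singular space does not by itself descend to a sub-stratum.) So as written the induction does not close.

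The paper's fix is different and avoids the component analysis entirely: instead of one copy of $\Z/p$, use the finite subgroup $G_k\subset T_I$ of all elements of order dividing $p^k$. Since $X$ has only finitely many isotropy subgroups and $\bigcup_k G_k$ is dense in $T_I$, one has $X^{G_k}=X^{T_I}$ exactly for $k$ large, so $X_I=\q^{-1}(Q_I)$ is literally a connected component of $X^{G_k}$. The vanishing $H^{\mathrm{odd}}(X^{G_k};\Z/p)=0$ is then obtained by applying Lemma~\ref{lemm:3-0} repeatedly along a chain of order-$p$ steps inside the ambient torus: at each stage the order-$p$ subgroup sits in a connected group acting on the current fixed-point set, hence acts trivially on its integral cohomology, and the Smith inequality propagates the vanishing. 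If you replace your induction on components by this ``saturate the $p$-torsion of $T_I$'' step, the rest of your argument goes through and coincides with the paper's proof.
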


\begin{proof}
We abbreviate $X(Q,\v)$ as $X$ as before. Since $H^{odd}(X;\Z/p)=0$,
we have $H^{odd}(X^{G};\Z/p)=0$ for every $p$-subgroup $G$ of $T_I$ by
repeated use of \red{Lemma~\ref{lemm:3-0}. In fact, let $G$ be an order $p$ subgroup of $S^1$. The induced action of $G$ on $H^*(X)$ is trivial because $G$ is contained in the connected group $S^1$. Then ${\rm rk}~H^{odd}(X^G; \Z/p)\leq {\rm rk}~H^{odd}(X; \Z/p)$ by Lemma~\ref{lemm:3-0} applied with $A=\emptyset$. Therefore, $H^{odd}(X^G; \Z/p)=0$ by assumption. Repeating the same argument for $X^G$ with the induced action of $S^1/G$, which is again a circle group, we conclude that $H^{odd}(X^G; \Z/p)=0$ for any $p$-subgroup $G$ of $S^1$.} 

For a
positive integer $k$, let $G_k$ be the $p$-subgroup of $T_I$ consisting
of all elements of order at most $p^k$. Then $G_k\subset G_{k'}$ for
$k\le k'$ and the union $\bigcup_{k=1}^\infty G_k$ is dense in $T_I$.
Therefore $X^{G_k}=X^{T_I}$ if $k$ is sufficiently
large.\footnote{Detailed explanation about this assertion. Since the
set of isotropy groups of $X$ is finite, there is a positive integer
$r$ such that $X^{G_k}=X^{G_r}$ for every $k\ge r$. Since $G_r$ is a
subgroup of $T_I$, we have $X^{G_r}\supset X^{T_I}$. We shall prove
the opposite inclusion. Let $x\in X^{G_r}$. The isotropy subgroup
$T_x$ at $x$ contains $G_k$ for every $k\ge r$ because
$X^{G_k}=X^{G_r}$ but since $T_x$ is a closed subgroup of $T$, $T_x$
must contain the closure of $\cup_{k=r}^\infty G_k$, that is $T_I$.
Therefore $x\in X^{T_I}$ and hence $X^{G_r}=X^{T_I}$.} Since $X_I=\q^{-1}(Q_I)$ is a connected component of $X^{T_I}$, this shows that $H^{odd}(X_I;\Z/p)=0$. But $H^{2(n-|I|)-1}(X_I)$ is isomorphic to $H_1(Q_I)\oplus N(I)/\hN(I)$ by Proposition~\ref{prop:2-1} and hence the universal coefficient theorem implies the proposition. 
\end{proof}

When $H^{odd}(X(Q,\v);\Z/p)=0$, Proposition~\ref{prop:3-1} gives a constraint on the topology of $Q_I$, that is $H_1(Q_I;\Z/p)=0$. It is proved in \cite{ma-pa06} that if $X(Q,\v)$ is a manifold and $H^{odd}(X(Q,\v);\Z)=0$, then $Q$ is face-acyclic. This implies that there will be more constraints on the topology of $Q_I$ when $H^{odd}(X(Q,\v);\Z/p)=0$, to be more precise, we expect that $Q$ is \emph{face $p$-acyclic} which means that (every component of) $Q_I$ is acyclic with $\Z/p$-coefficients for every $I$. Therefore, in order to consider the converse of Proposition~\ref{prop:3-1}, it would be appropriate to assume that $Q$ is face $p$-acyclic. We will prove in Section~\ref{sect:5} that the converse holds in some cases while we will see in Section~\ref{sect:6} that the converse does not hold in general.

\section{Theorem on Elementary Divisors} \label{sect:4}

\red{We recall the Theorem on Elementary Divisors and deduce two facts from it, which will play a role in the next section.} 

\begin{theo}[Theorem on Elementary Divisors, see \cite{wa67}] \label{theo:4-1}
Let $N'$ be a submodule of rank $n'$ in $N=\Z^n$. Then there are bases $\{u'_1,\dots,u'_{n'}\}$ of $N'$ and $\{u_1,\dots,u_n\}$ of $N$ such that $u'_i=\epsilon_i u_i$ with some integer $\epsilon_i$ for $i=1,2,\dots,n'$ and $\epsilon_1|\epsilon_2|\dots|\epsilon_{n'}$. Moreover if $A=(a_1,\dots,a_k)$ is an $n\times k$ integer matrix whose column vectors $a_1,\dots,a_k$ generate $N'$ and 
\[
\delta_i:=\gcd\{\det B\mid B \text{ is an $i\times i$ submatrix of $A$}\},
\]
then $\delta_i=\delta_{i-1}\epsilon_i$ for $i=1,2,\dots,n'$. In particular, if $n'=n$, then $\delta_n=|N/N'|$. 
\end{theo}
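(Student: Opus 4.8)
The plan is to separate the statement into its two assertions — the existence of adapted bases, and the identification of the invariants $\delta_i$ with the partial products $\epsilon_1\cdots\epsilon_i$ — proving the first structurally and the second by a determinantal-invariance argument. First I would establish the adapted bases by induction on $n'=\rank N'$. The key is to choose a linear functional $\phi\in\Hom(N,\Z)$ for which the subgroup $\phi(N')\subset\Z$ is as large as possible; write $\phi(N')=\epsilon_1\Z$ with $\epsilon_1\ge 0$ and pick $u'_1\in N'$ with $\phi(u'_1)=\epsilon_1$. The crucial claim is that $\epsilon_1$ divides $\psi(u'_1)$ for every $\psi\in\Hom(N,\Z)$: if some $\psi(u'_1)$ were not a multiple of $\epsilon_1$, then a suitable integer combination of $\phi$ and $\psi$ would, by the Euclidean algorithm in $\Z$, send $u'_1$ to $\gcd(\epsilon_1,\psi(u'_1))<\epsilon_1$, contradicting the maximality of $\phi(N')$. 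Taking $\psi$ to be the coordinate projections, $\epsilon_1$ divides each coordinate of $u'_1$, so $u'_1=\epsilon_1 u_1$ for some $u_1\in N$, and $u_1$ is primitive since $\phi(u_1)=1$. One then checks the splittings $N=\Z u_1\oplus\ker\phi$ and $N'=\Z u'_1\oplus(N'\cap\ker\phi)$, and applies the inductive hypothesis to $N'\cap\ker\phi\subset\ker\phi\cong\Z^{n-1}$. The maximality of $\epsilon_1$ at each stage forces the divisibility chain $\epsilon_1\mid\epsilon_2\mid\cdots\mid\epsilon_{n'}$.

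Next I would treat the determinantal invariants. The point is that the ideal of $\Z$ generated by all $i\times i$ minors of an integer matrix is unchanged under multiplication on either side by a unimodular matrix: by the Cauchy--Binet formula every $i\times i$ minor of $PAQ$ is a $\Z$-linear combination of $i\times i$ minors of $A$, and applying the same fact to $P^{-1}(PAQ)Q^{-1}$ gives the reverse inclusion. Hence $\delta_i$, the gcd of the $i\times i$ minors of $A$, depends only on the subgroup $N'\subset N$ and not on the chosen generating matrix $A$. I may therefore compute $\delta_i$ using the matrix $\operatorname{diag}(\epsilon_1,\dots,\epsilon_{n'})$ (padded with zero columns) coming from the adapted bases: its nonzero $i\times i$ minors are exactly the products $\epsilon_{j_1}\cdots\epsilon_{j_i}$, and because $\epsilon_1\mid\cdots\mid\epsilon_{n'}$ the smallest such product $\epsilon_1\cdots\epsilon_i$ divides all the others, so $\delta_i=\epsilon_1\cdots\epsilon_i$. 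This yields $\delta_i=\delta_{i-1}\epsilon_i$ with the convention $\delta_0=1$. When $n'=n$ the basis decomposition gives $N/N'\cong\bigoplus_{i=1}^n \Z/\epsilon_i\Z$, whence $|N/N'|=\prod_i|\epsilon_i|=\delta_n$.

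The main obstacle I expect is the existence part — specifically the divisibility claim $\epsilon_1\mid\psi(u'_1)$ for all $\psi$, since it is exactly what makes the splitting $N=\Z u_1\oplus\ker\phi$ respect $N'$ and what simultaneously delivers the divisibility chain. Once that is in place the inductive step is routine. The invariance of $\delta_i$ and the diagonal computation are comparatively mechanical, the only care being to record that $\delta_i$ is taken as a nonnegative gcd so that the final equality $\delta_n=|N/N'|$ holds on the nose rather than up to sign.
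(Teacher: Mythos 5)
The paper does not prove this statement at all: it is quoted as the classical Theorem on Elementary Divisors with a reference to van der Waerden, so there is no in-paper argument to compare against. Your proposal is a correct and entirely standard proof of that classical fact (the usual Smith-normal-form argument): the induction via a functional $\phi$ whose image $\phi(N')=\epsilon_1\Z$ is maximal, the divisibility claim $\epsilon_1\mid\psi(u'_1)$ obtained from the Euclidean algorithm, the splittings $N=\Z u_1\oplus\ker\phi$ and $N'=\Z u'_1\oplus(N'\cap\ker\phi)$, and the Cauchy--Binet invariance of the ideals of $i\times i$ minors are all the textbook route. The only point where your write-up is slightly too narrow is the invariance step: two generating matrices of $N'$ need not have the same number of columns, so they are not related by unimodular matrices on both sides; what you actually need (and what Cauchy--Binet gives directly) is that if the columns of $A'$ are integer combinations of the columns of $A$, then every $i\times i$ minor of $A'$ lies in the ideal generated by the $i\times i$ minors of $A$, and mutual generation then gives equality of the ideals. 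With that phrasing the comparison of $A$ with the padded diagonal matrix $\mathrm{diag}(\epsilon_1,\dots,\epsilon_{n'})$ is legitimate, and the rest of your computation, including $\delta_n=|N/N'|$ when $n'=n$, is correct.
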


We deduce two facts from Theorem~\ref{theo:4-1}.

\begin{lemm} \label{lemm:4-1}
Let $A$ be an $n\times n$ integer matrix of rank $n$ and $\tilde A\colon \R^n/\Z^n\to \R^n/\Z^n$ be the epimorphism induced from $A$. Then $\ker\tilde A\cong \coker A$. 
\end{lemm}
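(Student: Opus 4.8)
The plan is to reduce to the diagonal case by the Theorem on Elementary Divisors and then compute both sides by hand. First I would pin down the kernel concretely. Since $A$ has rank $n$, it is invertible over $\R$, so the induced map $\tilde A$ on $\R^n/\Z^n$ is indeed surjective, and
\[
\ker\tilde A=\{x+\Z^n\mid Ax\in\Z^n\}=A^{-1}\Z^n/\Z^n,
\]
where $A^{-1}$ is the real inverse matrix; note $A\Z^n\subset\Z^n\subset A^{-1}\Z^n$, so this is a finite group. The target of the isomorphism is $\coker A=\Z^n/A\Z^n$, the cokernel of $A$ viewed as an endomorphism of $\Z^n$.

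Next I would invoke Theorem~\ref{theo:4-1}, applied to the image lattice $A\Z^n$ of rank $n$ in $\Z^n$. In matrix terms this is equivalent to the existence of unimodular integer matrices $U,V$ (i.e.\ invertible over $\Z$) with $A=UDV$ and $D=\mathrm{diag}(\epsilon_1,\dots,\epsilon_n)$, $\epsilon_1\mid\cdots\mid\epsilon_n$, all $\epsilon_i$ nonzero. The point is that $U$ and $V$ act compatibly on both objects in question. On the one hand, $U$ and $V$ preserve $\Z^n$, so they induce automorphisms $\tilde U,\tilde V$ of $\R^n/\Z^n$ and $\tilde A=\tilde U\,\tilde D\,\tilde V$; hence $\ker\tilde A=\tilde V^{-1}(\ker\tilde D)\cong\ker\tilde D$. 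On the other hand, since $U,V$ are unimodular, $\coker A=\Z^n/UDV\Z^n\cong\Z^n/D\Z^n=\coker D$. Thus it suffices to establish the claim for the diagonal matrix $D$.

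For $D$ the two sides are immediate. The map $\tilde D$ is the product map $(x_i)\mapsto(\epsilon_i x_i)$ on $(\R/\Z)^n$, whose kernel is the product of the $\epsilon_i$-torsion subgroups of $\R/\Z$, so $\ker\tilde D\cong\bigoplus_i\Z/\epsilon_i$; and plainly $\coker D=\bigoplus_i\Z/\epsilon_i$. Chaining the isomorphisms gives $\ker\tilde A\cong\ker\tilde D\cong\coker D\cong\coker A$. This argument is essentially routine, so the only point requiring care is ensuring that the \emph{same} unimodular pair $U,V$ is used on both sides, so that the reduction to the diagonal case simultaneously respects the kernel on the torus and the cokernel on the lattice; I would state the compatibility $\tilde A=\tilde U\,\tilde D\,\tilde V$ and $A\Z^n=U D\Z^n$ explicitly to keep this transparent.
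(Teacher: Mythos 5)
Your proof is correct and follows essentially the same route as the paper: reduce to the diagonal case via the Theorem on Elementary Divisors and compute both sides as $\bigoplus_i\Z/\epsilon_i$. The paper simply says "we may think of $A$ as the diagonal matrix," and your explicit verification that the same unimodular pair $U,V$ identifies $\ker\tilde A$ with $\ker\tilde D$ and $\coker A$ with $\coker D$ is exactly the justification the paper leaves implicit.
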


\begin{proof}
By Theorem~\ref{theo:4-1} we may think of $A$ as the diagonal matrix with diagonal entries $\epsilon_1,\dots,\epsilon_n$. Then one easily sees that $\ker\tilde A$ and $\coker A$ are both isomorphic to $\prod_{i=1}^n\Z/\epsilon_i$, proving the lemma. 
\end{proof}

Let $a_1,\dots,a_{n+1}$ be elements of $\Z^n$ which generate a sublattice $\langle a_1,\dots,a_{n+1}\rangle$ of rank $n$ and set \red{$\d_i:=|\det((a_j)_{j\not=i})|$} for $i\in [n+1]$. It follows from Theorem~\ref{theo:4-1} that 
\begin{equation} \label{eq:4-1}
\delta_n=\gcd(\d_1,\dots,\d_{n+1})=|\Z^n/\langle a_1,\dots,a_{n+1}\rangle|.
\end{equation}
Suppose that $a_{n+1}$ is primitive. Let $\bar{a}_k$ $(k\not=n+1)$ be 
the projection image of $a_k$ on $\Z^{n}/\langle a_{n+1}\rangle$ and let $a'_k$ be the primitive vector in the quotient lattice $\Z^{n}/\langle a_{n+1}\rangle$ which has the same direction as $\bar{a}_k$ when $\bar{a}_k$ is nonzero and $a'_k$ be the zero vector when so is $\bar{a}_k$. Set $\d_j':=\det(a_1',\dots,\widehat{a_j'},\dots,a_n')$
. With this understood we have the following. 

\begin{lemm} \label{lemm:4-2}
$\gcd(\d_1,\dots,\d_{n})\big|\d_{n+1}$, i.e., $\gcd(\d_1,\dots,\d_{n})=\gcd(\d_1,\dots,\d_{n+1})$. Moreover, $\gcd(\d_1',\dots,\d_{n}')\big|\gcd(\d_1,\dots,\d_{n+1})$.
\end{lemm}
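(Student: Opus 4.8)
Let me restate Lemma~\ref{lemm:4-2}. We have $a_1,\dots,a_{n+1}\in\Z^n$ generating a rank-$n$ sublattice, with $a_{n+1}$ primitive. We set $\d_i=|\det((a_j)_{j\ne i})|$, and then project modulo $\langle a_{n+1}\rangle$: the $\bar a_k$ are images in $\Z^n/\langle a_{n+1}\rangle\cong\Z^{n-1}$, the $a_k'$ are the associated primitive vectors, and $\d_j'=\det(a_1',\dots,\widehat{a_j'},\dots,a_n')$. Wait — I need to be careful. Let me reconsider the setup.

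Let me restart the analysis of what needs proving.
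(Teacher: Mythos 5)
Your proposal contains no proof: after restating the setup you write ``Let me restart the analysis of what needs proving'' and stop. Every substantive step is missing, so there is nothing to verify against the paper's argument.

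For comparison, the paper's proof runs as follows. By the Theorem on Elementary Divisors, since $a_{n+1}$ is primitive one may change basis so that $a_{n+1}=(0,\dots,0,1)^T$. Expanding $\d_{n+1}=|\det(a_1,\dots,a_n)|$ along the last row, each cofactor $\tilde a_j^n$ equals $\pm\det(a_1,\dots,\widehat{a_j},\dots,a_{n+1})=\pm \d_j$, so $\gcd(\d_1,\dots,\d_n)$ divides $\d_{n+1}$; this is the first claim. For the second, with $a_{n+1}=(0,\dots,0,1)^T$ one identifies $\Z^n/\langle a_{n+1}\rangle$ with $\Z^{n-1}$ and gets $\d_j=|\det(\bar a_1,\dots,\widehat{\bar a_j},\dots,\bar a_n)|$ for $j\le n$; since each $\bar a_k$ is a nonnegative integer multiple of the primitive vector $a_k'$, the determinant $\d_j'$ divides $\d_j$, and combining with the first claim gives $\gcd(\d_1',\dots,\d_n')\mid\gcd(\d_1,\dots,\d_{n+1})$. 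If you rewrite your attempt, these are the two ideas you need: the basis change normalizing $a_{n+1}$, and the cofactor expansion identifying the minors $\d_j$ with the cofactors of the last row.
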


\begin{proof}
\red{Theorem~\ref{theo:4-1} applied with $N'$ generated by $a_{n+1}$ says that there is a basis $\{u_1,\dots,u_n\}$ of $N=\Z^n$ such that $a_{n+1}=\epsilon_1 u_1$ with some integer $\epsilon_1$. But since $a_{n+1}$ is primitive, we have $\epsilon_1=\pm 1$. Therefore,} 
we may assume that $a_{n+1}=(0,\dots,0,1)^T$ \red{through a linear transformation of $\Z^n$}. We have 
\begin{equation} \label{eq:4-2}
\d_{n+1}=|\det(a_1,\dots,a_n)|=\big|\sum_{j=1}^n a_j^n\tilde{a}_j^n\big|
\end{equation}
where $a_j^n$ is the $(n,j)$ entry of the matrix $(a_1,\dots,a_n)$ and $\tilde{a}_j^n$ is its cofactor. Since $a_{n+1}=(0,\dots,0,1)^T$, $\tilde{a}_j^n$ agrees with $d_j=|\det(a_1,\dots,\widehat{a_j},\dots,a_{n+1})|$ up to sign. Therefore $\tilde{a}_j^n$ is divisible by $\gcd(d_1,\dots,d_n)$ for every $j$ and this together with \eqref{eq:4-2} implies the former statement in the lemma.

Since $a_{n+1}=(0,\dots,0,1)^T$, $\Z^n/\langle a_{n+1}\rangle$ can naturally be identified with $\Z^{n-1}$ and we have 
\begin{equation} \label{eq:4-3}
d_j=|\det(a_1,\dots,\widehat{a_j},\dots,a_{n+1})|=|\det(\bar{a}_1,\dots,\widehat{\bar{a}_j},\dots,\bar{a}_{n})|\quad \text{for $j=1,2,\dots,n$}
\end{equation}
where $\bar{a}_k$ $(k=1,2,\dots,n)$ is the projection image of $a_k$ on $\Z^n/\langle a_{n+1}\rangle=\Z^{n-1}$. Since $\bar{a}_k$ is a positive scalar multiple of $a'_k$, $\d_j'=|\det(a_1',\dots,\widehat{a_j'},\dots,a_n')|$ divides the latter term \red{in \eqref{eq:4-3} above and hence $d_j$}. This together with the former statement in the lemma implies the latter statement in the lemma. 
\end{proof}

\section{Converse of Proposition~\ref{prop:3-1} in three cases} \label{sect:5}

In this section we show that if $Q$ is face $p$-acyclic and has the same face poset as one of the following:
\begin{description}
\item[Case 1] the suspension \red{$\lozenge^n$} of an $(n-1)$-simplex $\Delta^{n-1}$ (see the Introduction), 
\item[Case 2] the $n$-simplex $\Delta^n$,
\item[Case 3] the prism $\Delta^{n-1}\times [-1,1]$,
\end{description}
then the converse of Proposition~\ref{prop:3-1} holds, \red{i.e. if $\mu(Q_I)$ is finite and coprime to $p$ for every $I$, then $H^{odd}(X(Q,\v);\Z/p)=0$.}

\red{
First we establish Case 1.  Then we reduce Case 2 to Case 1 by collapsing a face of $Q$ to a point.  In Case 3,  according to the characteristic function $\v$, we collapse one or two faces of $Q$ to a point reducing Case 3 to Case 2 or Case 1.  The argument then becomes much more complicated than that reducing Case 2 to Case 1.  It would be interesting to see whether this inductive argument works for an arbitrary product of simplices. 
}

Let $q$ be a vertex of $Q$. Then $q$ lies in $Q_I$ for some $I\subset [m]$ with $|I|=n$. We set
\red{\[
\d_Q(q):=|\det((v_i)_{i\in I})|
\]}
where $v_i=v(Q_i)$ as before. 

\medskip
{\bf Case 1.} In this case $Q$ has two vertices, say $q$ and $q'$, and $\d_Q(q)=\d_Q(q')=\m(Q)$. 

\begin{prop} \label{prop:5-1}
Suppose that $Q$ is face $p$-acyclic, has the \red{same} face poset as \red{$\lozenge^n$} and $\m(Q)$ is coprime to $p$. Then $X(Q,\v)$ has the same cohomology as $S^{2n}$ with $\Z/p$-coefficients, in particular $H^{odd}(X(Q,\v);\Z/p)=0$. 
\end{prop}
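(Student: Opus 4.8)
The plan is to reduce the orbifold $X=X(Q,\v)$ to a smooth torus manifold, for which only the single invariant $\m(Q)$ matters, and then to pin down its $\Z/p$-cohomology by an Euler characteristic count once the odd-degree cohomology is known to vanish. First I record the combinatorial input: a $Q$ with the face poset of $\lozenge^n$ has exactly $m=n$ facets and exactly two vertices $q,q'$, both lying in $Q_{[n]}=\bigcap_{i=1}^n Q_i$. Since $Q$ has a vertex, $v_1,\dots,v_n$ are linearly independent and generate $\hN$, hence form a basis of $\hN$; consequently $\d_Q(q)=|\det(v_1,\dots,v_n)|=|N/\hN|=\m(Q)$, which is coprime to $p$ by hypothesis.

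Next I would invoke the factorization set up in the proof of Proposition~\ref{prop:2-2}: writing $\rho\colon\hat T\to T$ for the finite cover corresponding to $\hN\subset N$ and $\hat\v$ for the lifted characteristic function, one has $\hat X:=X(Q,\hat\v)=\mathcal{Z}_Q/\ker\hat\v$ and $X=\hat X/\ker\rho$ with $\ker\rho\cong N/\hN$ of order $\m(Q)$. Because $v_1,\dots,v_n$ is a basis of $\hN$, every subset $\{v_i\}_{i\in I}$ with $Q_I\ne\emptyset$ is part of a basis, so $\hat X$ is a smooth closed torus manifold over $Q$. The group $G:=\ker\rho$ is a subgroup of the connected torus $\hat T$, hence acts trivially on $H^*(\hat X)$; since $|G|=\m(Q)$ is invertible in $\Z/p$, the transfer gives $H^*(X;\Z/p)\cong H^*(\hat X;\Z/p)^G=H^*(\hat X;\Z/p)$. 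Thus it suffices to show $H^*(\hat X;\Z/p)\cong H^*(S^{2n};\Z/p)$. The model to keep in mind is that when $Q$ is genuinely $\lozenge^n$, this $\hat X$ is the standard linear $T^n$-sphere $S^{2n}\subset\C^n\times\R$, whose orbit space is exactly $\lozenge^n$.

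Granting that $H^{odd}(\hat X;\Z/p)=0$, the computation closes up at once. The manifold $\hat X$ is closed, connected and orientable (indeed $H^{2n}(\hat X)\cong\Z$ by Proposition~\ref{prop:2-1}), so $b_0=b_{2n}=1$ over $\Z/p$; moreover $\chi(\hat X)$ equals the number of $\hat T$-fixed points, namely the number of vertices of $Q$, which is $2$ (as in the Euler-characteristic argument of Corollary~\ref{coro:2-3}, via \cite[Theorem 10.9]{bred72}). With all odd Betti numbers zero one has $\chi(\hat X)=\sum_i b_{2i}=2$, and since $b_0=b_{2n}=1$ and every $b_{2i}\ge 0$, the intermediate sum $b_2+\dots+b_{2n-2}$ must vanish. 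Hence $H^*(\hat X;\Z/p)\cong H^*(S^{2n};\Z/p)$, and therefore so does $H^*(X;\Z/p)$; in particular $H^{odd}(X;\Z/p)=0$. A $\Z/p$-coefficient reading of Proposition~\ref{prop:2-1} gives an independent check of $b_{2n-1}=b_{2n-2}=0$, since for $\hat X$ one has $m=\rank\hN=n$ and the relevant $N/\hN$ and wedge terms vanish.

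The one substantial point, and the main obstacle, is the vanishing of the odd-degree $\Z/p$-cohomology of $\hat X$. For $Q$ genuinely homeomorphic to $\lozenge^n$ I would obtain it directly: splitting $Q$ about its equatorial $\Delta^{n-1}$ expresses $\hat X$ as a union of two cones on the fixed points whose intersection is $\hat\pi^{-1}(\Delta^{n-1})$, a $\Z/p$-homology $(2n-1)$-sphere, so that $\hat X$ is cohomologically a suspension and hence a $\Z/p$-homology $2n$-sphere, the equatorial piece being identified by induction on $n$. For a general $Q$ merely having the face poset of $\lozenge^n$ (which, as the Remark notes, need not be homeomorphic to it), such a global splitting need not exist, and here the vanishing of odd cohomology is precisely the content of the theory of torus manifolds over face-acyclic bases of Masuda--Panov \cite{ma-pa06}, adapted to $\Z/p$-coefficients under the face $p$-acyclic hypothesis. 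I would therefore establish even-degree concentration by a Mayer--Vietoris build-up of $\hat X$ over the face filtration of $Q$, using face $p$-acyclicity at each stage to kill the connecting homomorphisms; this is the delicate step.
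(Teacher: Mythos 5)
Your reduction to a smooth model is correct and is essentially the paper's first step in different clothing: the paper passes to $X(Q,e)$ with $e(Q_i)=e_i$ and the covering homomorphism $\tv\colon T^n\to T$, notes $|\ker\tv|=|N/\hN|=\m(Q)$ is coprime to $p$ and acts trivially on cohomology (being contained in a connected group), and applies \cite[Theorem 2.4, p.120]{bred72} to get $H^*(X;\Z/p)\cong H^*(X(Q,e);\Z/p)$; your $\hat X=X(Q,\hat\v)$ with $\ker\rho\cong N/\hN$ is the same manifold and the same transfer argument. Your Euler-characteristic bookkeeping at the end is also fine, though it is not needed once the comparison below is in place.

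The genuine gap is exactly the step you flag and then defer: proving $H^{odd}(\hat X;\Z/p)=0$ (indeed $H^*(\hat X;\Z/p)\cong H^*(S^{2n};\Z/p)$) when $Q$ is only assumed to be face $p$-acyclic with the face \emph{poset} of $\lozenge^n$, not homeomorphic to it. Your suspension/splitting argument genuinely uses the product structure of $\Delta^{n-1}\times[-1,1]$ and does not survive the generalization, as you note, and ``a Mayer--Vietoris build-up over the face filtration using face $p$-acyclicity to kill connecting homomorphisms'' is a plan, not a proof --- it is precisely the content of the proposition, so leaving it as a sketch leaves the proof incomplete. The paper's device for closing this is a comparison map rather than a from-scratch computation: since every face of $\lozenge^n$ is contractible, one can build a face-preserving map $f\colon Q\to\lozenge^n$ inducing an isomorphism of face posets; face $p$-acyclicity of $Q$ then forces $f$ to be a $\Z/p$-cohomology isomorphism on every face, hence the induced map $X(Q,e)\to X(\lozenge^n,e)\cong S^{2n}$ is a $\Z/p$-cohomology isomorphism. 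That single comparison-map idea replaces your ``delicate step,'' and without it (or a completed filtration argument) your proposal does not establish the proposition.
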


\begin{proof}
\red{When $n=1$, $Q$ is a closed interval and $X(Q,\v)$ is homeomorphic to $S^2$; so the proposition holds when $n=1$. In the following we assume $n\ge 2$, so that $Q$ has $n$ facets.} 

Let $T^n=(S^1)^n$. Then $\Hom(S^1,T^n)$ is naturally isomorphic to $\Z^n$ and we identify them. Let $\{e_i\}_{i=1}^n$ be the standard basis of $\Z^n$ and $e\colon \{Q_1,\dots,Q_n\}\to \Z^n=\Hom(S^1,T^n)$ be the characteristic function assigning $e_i$ to $Q_i$. Then we have a $T^n$-space $X(Q,e)$ which is actually a manifold because $\{e_i\}_{i=1}^n$ is a basis of $\Z^n$. 

The characteristic vectors $v_i\in N=\Hom(S^1,T)$ define an epimorphism $\tv\colon T^n \to T$ sending $(h_1,\dots,h_n)$ to $\prod_{i=1}^nv_i(h_i)$. One can see that the surjective map from $Q\times T^n$ to $Q\times T$ sending $(q,t)$ to $(q,\tv(t))$ descends to a $\tv$-equivariant map from $X(Q,e)$ to $X(Q,v)$ and further descends to a homeomorphism 
$$X(Q,e)/\ker\tv\approx X(Q,\v).$$ 
Here $|\ker\tv|=|N/\hN|$ by Lemma~\ref{lemm:4-1} and it is coprime to $p$ by assumption. Moreover, since $\ker\tv$ is a subgroup of the connected group $T^n$ acting on $X(Q,e)$, the induced action of $\ker\tv$ on $H^*(X(Q,e);\Z/p)$ is trivial. Therefore we have 
$$H^*(X(Q,e)/\ker\tv;\Z/p)\cong H^*(X(Q,e);\Z/p)$$ 
(see \cite[Theorem 2.4 in p.120]{bred72}) and hence it suffices to prove that $X(Q,e)$ has the same cohomology as $S^{2n}$ with $\Z/p$-coefficients. 

Since $Q$ has the same face poset as \red{$\lozenge^n$} and every face of \red{$\lozenge^n$} is contractible, there is a face preserving map $f\colon Q\to \red{\lozenge^n}$ which induces an isomorphism on the face posets. Since $Q$ is face $p$-acyclic, $f$ induces an isomorphism on cohomology with $\Z/p$-coefficients at each face. Similarly to the definition of $e$, one has a characteristic function on \red{$\lozenge^n$}, also denoted by $e$. Then the map from $Q\times T^n$ to $\red{\lozenge^n}\times T^n$ sending $(q,t)$ to $(f(q),t)$ descends to a map $$X(Q,e)\to X(\red{\lozenge^n},e)$$ which induces an isomorphism on cohomology with $\Z/p$-coefficients. Since $X(\red{\lozenge^n},e)$ is homeomorphic to $S^{2n}$, this proves the desired result. 
\end{proof}

\medskip
{\bf Case 2.} Since $Q$ has the same face poset as the $n$-simplex $\Delta^n$, $Q$ has $n+1$ facets $Q_1,\dots,Q_{n+1}$ and $n+1$ vertices 
$q_1,\dots,q_{n+1}$. We number them in such a way that $q_i$ is the unique vertex not contained in $Q_i$. 
It follows from \eqref{eq:4-1} and Lemma~\ref{lemm:4-2} that 
\red{
\begin{equation} \label{eq:5-1}
\begin{split}
&\m(Q)=\gcd(\d_Q(q_1),\dots,\d_Q(q_{n+1}))=\gcd(\d_Q(q_1),\dots,\widehat{\d_Q(q_i)},\dots,\d_Q(q_{n+1})) ~\text{and} \\
&\m(Q_i)~\text{divides}~ \m(Q) \quad\text{for any $i\in [n+1]$}.
\end{split}
\end{equation}
In fact, the former identity in \eqref{eq:5-1} follows from \eqref{eq:4-1}. The latter identity with $i=n+1$ follows from Lemma~\ref{lemm:4-2} but the same proof of Lemma~\ref{lemm:4-2} works for any $i$ and proves the desired identity. Similarly, the last assertion in \eqref{eq:5-1} also follows from (the proof of) Lemma~\ref{lemm:4-2}. 
}

\begin{prop} \label{prop:5-2}
Suppose that $Q$ is face $p$-acyclic, has the same face poset as $\Delta^n$ and $\m(Q)$ is coprime to $p$. Then $H^{odd}(X(Q,\v);\Z/p)=0$. 
\end{prop}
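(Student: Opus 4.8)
The plan is to argue by induction on $n$, reducing Case~2 to Case~1 by collapsing one facet of $Q$ to a point. The base case $n=1$ is clear: then $Q$ is a closed interval and $X(Q,\v)\approx S^2$, so $H^{odd}(X(Q,\v);\Z/p)=0$. For the inductive step I would first use \eqref{eq:5-1} to choose the facet to collapse. Since $\m(Q)=\gcd(\d_Q(q_1),\dots,\d_Q(q_{n+1}))$ is coprime to $p$, at least one of the integers $\d_Q(q_j)$ is coprime to $p$; after relabelling I may assume $\d_Q(q_{n+1})$ is coprime to $p$.

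Let $\phi\colon Q\to\bar Q$ be the map that collapses the facet $Q_{n+1}$ to a single point $\bar q$. Combinatorially this replaces the face poset of $\Delta^n$ by that of $\lozenge^n$: the vertices $q_1,\dots,q_n$ (all of which lie on $Q_{n+1}$) are identified to $\bar q$, the remaining vertex $q_{n+1}$ becomes the second apex, and the surviving facets are $\bar Q_i:=\phi(Q_i)$ for $i\le n$. I assign to $\bar Q$ the characteristic function determined by $\bar\v(\bar Q_i):=v_i$; this is legitimate because $v_1,\dots,v_n$ are linearly independent, being the characteristic vectors meeting at the vertex $q_{n+1}=Q_{[n]}$. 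Writing $X_{n+1}:=\q^{-1}(Q_{n+1})$, which is the lower-dimensional torus orbifold $X(Q_{n+1},\v_{\{n+1\}})$, the central geometric claim is the homeomorphism
\[
X(\bar Q,\bar\v)\approx X(Q,\v)/X_{n+1}.
\]
Indeed, $\phi\times\mathrm{id}\colon Q\times T\to\bar Q\times T$ descends to a continuous map $X(Q,\v)\to X(\bar Q,\bar\v)$ which collapses $X_{n+1}$ to the single $T$-fixed point over $\bar q$ — the fibre there is a point because $v_1,\dots,v_n$ span a finite-index sublattice, so $T_{[n]}=T$ — and is bijective elsewhere; since the spaces are compact Hausdorff it is a homeomorphism onto $X(Q,\v)/X_{n+1}$.

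I would then record that $\bar Q$ meets the hypotheses of Case~1. It has the face poset of $\lozenge^n$, it is face $p$-acyclic (each face $\bar Q_i$ is the quotient $Q_i/(Q_i\cap Q_{n+1})$ of the $\Z/p$-acyclic $Q_i$ by the $\Z/p$-acyclic face $Q_i\cap Q_{n+1}$, hence again $\Z/p$-acyclic), and $\m(\bar Q)=\d_Q(q_{n+1})$, which is coprime to $p$ by our choice. Proposition~\ref{prop:5-1} then gives that $X(\bar Q,\bar\v)$ has the $\Z/p$-cohomology of $S^{2n}$; in particular $H^{odd}(X(\bar Q,\bar\v);\Z/p)=0$. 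Simultaneously, $X_{n+1}=X(Q_{n+1},\v_{\{n+1\}})$ has orbit space $Q_{n+1}$, which is face $p$-acyclic with the face poset of $\Delta^{n-1}$, and $\m(Q_{n+1})\mid\m(Q)$ by \eqref{eq:5-1}, so $\m(Q_{n+1})$ is coprime to $p$; the inductive hypothesis (Case~2 in dimension $n-1$) yields $H^{odd}(X_{n+1};\Z/p)=0$.

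Finally I would feed these two vanishing statements into the long exact cohomology sequence of the pair $(X(Q,\v),X_{n+1})$, which (using $H^k(X(Q,\v),X_{n+1})\cong H^k(X(\bar Q,\bar\v))$ for $k\ge1$) reads
\[
\cdots\to H^k(X(\bar Q,\bar\v);\Z/p)\to H^k(X(Q,\v);\Z/p)\to H^k(X_{n+1};\Z/p)\to\cdots.
\]
For odd $k$ the two outer groups vanish, forcing $H^k(X(Q,\v);\Z/p)=0$, as required. I expect the main obstacle to be the second paragraph: making rigorous the collapse construction, verifying the homeomorphism $X(\bar Q,\bar\v)\approx X(Q,\v)/X_{n+1}$ together with the fact that the collapsed orbit space $\bar Q$ still satisfies the standing hypotheses needed for Proposition~\ref{prop:5-1}, and checking that the elementary-divisor arithmetic of \eqref{eq:5-1} really pins down $\m(\bar Q)=\d_Q(q_{n+1})$ so that our choice of facet makes it coprime to $p$.
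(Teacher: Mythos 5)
Your proposal is correct and follows essentially the same route as the paper: induct on $n$, use \eqref{eq:5-1} to pick a facet $Q_i$ with $\d_Q(q_i)$ coprime to $p$, collapse it to reduce to Case~1 via Proposition~\ref{prop:5-1}, apply the inductive hypothesis to $X_i$, and conclude from the long exact sequence of the pair $(X,X_i)$. The only difference is that you spell out the collapse construction and the homeomorphism $X(\bar Q,\bar\v)\approx X/X_{n+1}$, which the paper leaves implicit.
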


\begin{proof}
We abbreviate $X(Q,\v)$ as $X$. We prove the proposition by induction on $n$. When $n=1$, $Q$ is a closed interval and $X$ is homeomorphic to $S^2$; so the proposition holds in this case. We assume that the proposition holds for any face $p$-acyclic $(n-1)$-dimensional manifold with corners satisfying the assumption in the proposition. For every $i$, $Q_i$ has the same face poset as $\Delta^{n-1}$ and $\m(Q_i)|\m(Q)$ by \eqref{eq:5-1}, so $H^{odd}(X_i;\Z/p)=0$ by the induction assumption, \red{where $X_i=\pi^{-1}(Q_i)$ and $\pi: X\rightarrow Q$ is the quotient map.} On the other hand, since $\m(Q)=\gcd(\d_Q(q_1),\dots,\d_Q(q_{n+1}))$ is coprime to $p$ by assumption, $\d_Q(q_i)$ is coprime to $p$ for some $i$. For such $i$, $Q/Q_i$ is face $p$-acyclic, has the same face poset as \red{$\lozenge^n$} and $\m(Q/Q_i)=\d_Q(q_i)$ is coprime to $p$, so $H^{odd}(X/X_i;\Z/p)=0$ by Proposition~\ref{prop:5-1}. These together with the exact sequence 
\[
\to H^{odd}(X/X_i;\Z/p)\to H^{odd}(X;\Z/p)\to H^{odd}(X_i;\Z/p)\to
\]
show $H^{odd}(X;\Z/p)=0$. 
\end{proof}

\medskip
{\bf Case 3.} We denote the facets of $Q$ corresponding to $\Delta^{n-1}\times\{\pm 1\}$ by $Q_{\pm}$ and the others by $Q_1,\dots,Q_n$. Accordingly, we abbreviate the characteristic vectors $\v(Q_\pm)$ as $\v_\pm$ and $\v(Q_i)$ as $\v_i$. We denote the vertices in $Q_\epsilon$ by $q_1^\epsilon,\dots,q_n^\epsilon$ for $\epsilon=\pm$ in such a way that $q_i^\epsilon$ is not contained in $Q_i$.

\begin{lemm} \label{lemm:5-1}
\red{Suppose that $Q$ is face $p$-acyclic and has the same face poset as $\Delta^{n-1}\times [-1,1]$.}
If $\m(Q)$ is coprime to $p$ and either $\m(Q_+)$ or $\m(Q_-)$ is coprime to $p$, then there is a vertex $q$ of $Q$ such that $\d_Q(q)$ is coprime to $p$.
\end{lemm}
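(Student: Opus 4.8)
The plan is to reduce the whole statement to linear algebra over the field $\Z/p$. First I observe it suffices to produce a single vertex $q$ with $d_Q(q)$ coprime to $p$, and that such a vertex exists if and only if $p$ does not divide $\gcd_{i,\epsilon}d_Q(q_i^\epsilon)$: if every $d_Q(q_i^\epsilon)$ were divisible by $p$ then so would their gcd, and conversely a prime not dividing the gcd must fail to divide at least one term. So I assume the contrary \emph{bad case}, that $p\mid d_Q(q_i^\epsilon)$ for all $i\in[n]$ and $\epsilon\in\{+,-\}$, and I aim to show that then \emph{both} $\mu(Q_+)$ and $\mu(Q_-)$ are divisible by $p$; this contradicts the hypothesis that at least one of them is coprime to $p$.

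Write $\bar w$ for the image of $w\in N$ in $N\otimes\Z/p\cong(\Z/p)^n$. The argument rests on three translations, each an instance (via the Theorem on Elementary Divisors) of the principle ``an index is prime to $p$ iff the reduced generators span'': (i) $\mu(Q)=|N/\hN|$ is coprime to $p$ iff $\{\bar v_+,\bar v_-,\bar v_1,\dots,\bar v_n\}$ spans $(\Z/p)^n$; (ii) since $d_Q(q_i^\epsilon)=|\det(v_\epsilon,(v_k)_{k\neq i})|$, it is divisible by $p$ iff $\{\bar v_\epsilon\}\cup\{\bar v_k\}_{k\neq i}$ is linearly dependent over $\Z/p$; (iii) $\mu(Q_+)$ is coprime to $p$ iff the reduced characteristic vectors $\bar a'_k$ of $Q_+$ span $N(\{+\})\otimes\Z/p$, where $\pi_{\{+\}}\colon N\to N(\{+\})=N/\langle v_+\rangle$ and $a'_k$ is the primitive vector in the direction of $\pi_{\{+\}}(v_k)$.

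Next I set $U:=\mathrm{span}_{\Z/p}\{\bar v_1,\dots,\bar v_n\}$ and $r:=\dim U$, and I determine $r$ in the bad case. By (i), adjoining the two vectors $\bar v_+,\bar v_-$ to $U$ must span $(\Z/p)^n$, so $r\ge n-2$. The case $r=n$ is impossible: the $\bar v_k$ would form a basis, and (ii) would force $\bar v_+$ into $\bigcap_i\mathrm{span}\{\bar v_k\}_{k\neq i}=\{0\}$, contradicting primitivity of $v_+$. The case $r=n-1$ is also impossible: the $\bar v_k$ then carry a one-dimensional space of relations, and choosing $i$ in the support of a nonzero relation makes $\{\bar v_k\}_{k\neq i}$ a basis of $U$, so (ii) forces $\bar v_+\in U$ and likewise $\bar v_-\in U$, again contradicting (i). Hence the bad case forces $r=n-2$.

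The case $r=n-2$ is the heart of the matter and the step I expect to be hardest, because there the conditions in (ii) hold automatically — the $n-1$ vectors $\{\bar v_k\}_{k\neq i}$ already lie in the $(n-2)$-dimensional $U$ and so are dependent — so the hypothesis on $\mu(Q)$ carries no further information and everything must come from the caps. The key observation is that $\bar v_+\notin U$ (otherwise $\bar v_+,\bar v_-$ and $U$ could not span), so no $\bar v_k$ is parallel to $\bar v_+$ modulo $p$; combined with the equivalence $p\mid c^+_k\iff\bar v_k\parallel\bar v_+$, where $c^+_k$ is defined by $\pi_{\{+\}}(v_k)=c^+_k a'_k$, this shows every $c^+_k$ is prime to $p$, so $\bar a'_k$ is a nonzero scalar multiple of the reduction of $\pi_{\{+\}}(v_k)$. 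Therefore $\mathrm{span}\{\bar a'_k\}=\pi_{\{+\}}(U)$, which has dimension $n-2$ since $\pi_{\{+\}}$ is injective on $U$ (again because $\bar v_+\notin U$); as $\dim_{\Z/p}\big(N(\{+\})\otimes\Z/p\big)=n-1$, translation (iii) yields $p\mid\mu(Q_+)$, and the symmetric argument with $Q_-$ gives $p\mid\mu(Q_-)$. This is exactly where the prism's special feature — each side facet $Q_k$ meeting both caps $Q_+$ and $Q_-$ — is essential, which is presumably why this reduction is more delicate than the one from $\Delta^n$.
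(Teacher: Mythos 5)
Your proof is correct. The three translations via the Theorem on Elementary Divisors are the right dictionary, the trichotomy on $r=\dim_{\Z/p}U$ is exhaustive given (i), the exclusions of $r=n$ and $r=n-1$ are sound (primitivity of $v_\pm$ gives $\bar v_\pm\neq 0$), and in the case $r=n-2$ the chain $\bar v_+\notin U\Rightarrow p\nmid c^+_k\Rightarrow\mathrm{span}\{\bar a'_k\}=\bar\pi_{\{+\}}(U)$, of dimension $n-2<n-1$, $\Rightarrow p\mid\mu(Q_+)$ is valid; the equivalence $p\mid c^+_k\iff\bar v_k\in\langle\bar v_+\rangle$ holds because $v_+$ together with a lift of the primitive $a'_k$ spans a rank-two direct summand of $N$. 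The paper's proof has the same skeleton --- assume $p\mid d_Q(q)$ for every vertex and contradict one of the $\mu$-hypotheses --- but aims the contradiction the other way. It takes $\mu(Q_+)$ coprime to $p$ as input, uses it together with the bad case to show that \emph{some} $\bar v_i$ lies in $\langle\bar v_+\rangle$ (its Claim~1, phrased as $p\mid\gcd(v_i^1,\dots,v_i^{n-1})$ after normalizing $v_+=(0,\dots,0,1)^T$), and then expands determinants to show that every $n\times n$ minor of $(v_1,\dots,v_n,v_-,v_+)$ is divisible by $p$, contradicting $p\nmid\mu(Q)$. You instead spend the hypothesis $p\nmid\mu(Q)$ first, to pin down $\dim U=n-2$ and $\bar v_\pm\notin U$, and conclude that \emph{no} $\bar v_k$ is parallel to $\bar v_+$, whence $p$ divides both $\mu(Q_+)$ and $\mu(Q_-)$ --- a slightly stronger intermediate conclusion than the paper needs. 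Your version is coordinate-free and makes visible exactly where each hypothesis enters (in particular why the cap hypothesis is indispensable once the side conditions become vacuous at $r=n-2$), at the price of a case split; the paper's cofactor computations are more opaque but are of a piece with Lemma~\ref{lemm:4-2} and with the Claim inside the proof of Proposition~\ref{prop:5-4}, where the same manipulations are reused.
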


\red{We will prove this lemma later. It suffices to prove the following for our purpose in Case 3.} 

\begin{prop} \label{prop:5-4}
Suppose that $Q$ is face $p$-acyclic, has the same face poset as $\Delta^{n-1}\times [-1,1]$ and $\m(Q)$, $\m(Q_\pm)$ are coprime to $p$. Then $H^{odd}(X(Q,\v);\Z/p)=0$.
\end{prop}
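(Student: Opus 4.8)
The plan is to mimic the inductive strategy of Proposition~\ref{prop:5-2}, but now the ``base collapse'' is more delicate because the prism $\Delta^{n-1}\times[-1,1]$ has two distinguished facets $Q_+$ and $Q_-$ rather than one. First I would record the structure of the facets of $Q$: each $Q_i$ $(i\in[n])$ has the same face poset as $\Delta^{n-2}\times[-1,1]$ (a prism one dimension lower), while $Q_+$ and $Q_-$ each have the same face poset as $\Delta^{n-1}$. Using the divisibility relations among the $\m$'s—analogous to~\eqref{eq:5-1} and obtained by the same elementary-divisor computation (Lemma~\ref{lemm:4-2})—I would check that $\m(Q_i)$, $\m(Q_\pm\cap Q_i)$, and the relevant $\m$'s of the proper faces all remain coprime to $p$. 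This lets me apply the induction hypothesis to each $X_i=\pi^{-1}(Q_i)$ (a Case~3 prism in dimension $n-1$) to conclude $H^{odd}(X_i;\Z/p)=0$, and Proposition~\ref{prop:5-2} to each $X_{\pm}=\pi^{-1}(Q_\pm)$ (a Case~2 simplex) to conclude $H^{odd}(X_\pm;\Z/p)=0$.

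Second, I would carry out the collapse. By Lemma~\ref{lemm:5-1}, the hypotheses that $\m(Q)$ and $\m(Q_\pm)$ are coprime to $p$ guarantee a vertex $q$ with $\d_Q(q)$ coprime to $p$; I would use such a vertex to identify a good face to crush. The idea is that collapsing one of the end-faces, say $Q_-$, to a point turns the prism into (something with the face poset of) a simplex $\Delta^n$, reducing Case~3 to Case~2. Concretely I expect to build the quotient space $X/X_-$ and identify it with an $X(Q',\v')$ where $Q'=Q/Q_-$ has the face poset of $\Delta^n$; then I must verify $Q'$ is face $p$-acyclic and that $\m(Q')$ is coprime to $p$, so that Proposition~\ref{prop:5-2} gives $H^{odd}(X/X_-;\Z/p)=0$. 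With the long exact sequence of the pair $(X,X_-)$,
\[
\to H^{odd}(X/X_-;\Z/p)\to H^{odd}(X;\Z/p)\to H^{odd}(X_-;\Z/p)\to,
\]
the vanishing of the two outer groups forces $H^{odd}(X;\Z/p)=0$, exactly as in Proposition~\ref{prop:5-2}.

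\textbf{The main obstacle.} The hard part will be ensuring that after collapsing a face the \emph{new} characteristic data still satisfies the coprimality hypothesis; this is precisely why the authors warn that the reduction here is ``much more complicated.'' When collapsing $Q_-$, the characteristic vector $\v_-$ disappears and the new multiplicities $\m(Q'_J)$ are governed by the projected vectors, so I cannot simply quote~\eqref{eq:5-1}; instead I expect to need a careful case analysis using Lemma~\ref{lemm:5-1} to choose \emph{which} end to collapse (and possibly to perform a preliminary collapse of an additional face), matching the remark that in Case~3 one may have to ``collapse one or two faces of $Q$ to a point.'' The delicate point is that $\m(Q_+)$ and $\m(Q_-)$ enter asymmetrically: the existence of the good vertex $q$ supplied by Lemma~\ref{lemm:5-1} is what lets me pick the collapse so that $\m$ of the collapsed quotient stays coprime to $p$, and verifying this coprimality—rather than the topology, which follows routinely from face $p$-acyclicity and the face-preserving-map argument used in Proposition~\ref{prop:5-1}—is where the real work lies.

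\textbf{Remaining bookkeeping.} Once the collapse is justified, the rest is routine: the identification $X/X_- \approx X(Q/Q_-,\v')$ follows from the quotient construction exactly as in the reductions already carried out, and the face $p$-acyclicity of $Q/Q_-$ follows from that of $Q$ together with the acyclicity of the crushed face. I would present the argument as an induction on $n$ with base case $n=1$ (where $Q$ is an interval and $X\cong S^2$), relegating the proof of Lemma~\ref{lemm:5-1} to a separate argument and invoking it at the single point where the good vertex is needed.
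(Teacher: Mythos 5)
Your overall strategy---apply Proposition~\ref{prop:5-2} to the end faces $X_\pm$, invoke Lemma~\ref{lemm:5-1} to produce a vertex $q$ with $\d_Q(q)$ coprime to $p$, collapse an end face, and run the long exact sequence of the pair---is exactly the paper's, and your argument is complete in the case the paper calls Case~a, where the collapse carries an induced characteristic function and $\d_{Q/Q_+}(q)=\d_Q(q)$ makes $\m(Q/Q_+)$ coprime to $p$. But the proposal stops precisely where the real difficulty begins, and the difficulty is not the one you name. The obstruction is not that the new multiplicities might fail to be coprime to $p$; it is that when $\det(\v_1,\dots,\v_n)=0$ the quotient $Q/Q_+$ admits \emph{no} characteristic function at all: at the collapsed vertex all $n$ side facets meet, and condition (2) of the definition of a characteristic function requires $\v_1,\dots,\v_n$ to be linearly independent there. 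So $X/X_+$ is simply not of the form $X(Q',\v')$ and Proposition~\ref{prop:5-2} cannot be quoted. Saying you ``expect to need a careful case analysis \dots and possibly a preliminary collapse of an additional face'' acknowledges the gap without filling it; note also that collapsing the other end face instead does not help, since the suspension $\lozenge^n$ imposes the same linear-independence requirement at each of its two vertices.

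The paper's resolution of this degenerate case has two ingredients you would need to supply. First, a computational Claim: after normalizing $\v_n=(0,\dots,0,1)^T$ and expanding $\d_Q(\qn)=|\det(\v_1,\dots,\v_{n-1},\v_-)|$ along the $n$th row, the hypothesis $\det(\v_1,\dots,\v_n)=0$ kills the leading term and one finds that $\gcd(\d_{\Qn}(q_1^-),\dots,\d_{\Qn}(q_{n-1}^-))$ divides $\d_Q(\qn)$; hence some $\d_{\Qn}(q_i^-)$, and therefore $\m(\Qn)$, is coprime to $p$. Second, a change of what is proved by induction: one establishes the \emph{relative} vanishing $H^{odd}(X/X_+;\Z/p)=0$ by induction on $n$, filtering $X/X_+$ by the subspace $\Xn/(\Xn\cap X_+)$ (handled by the inductive hypothesis for the lower-dimensional prism $\Qn$, which is why the Claim is needed) with quotient $X(\bQ,\tv)$, where $\bQ=Q/(\Qn\cup Q_+)$ has the face poset of $\lozenge^n$ and $\m(\bQ)=\d_Q(\qn)$ is coprime to $p$, so that Proposition~\ref{prop:5-1} applies. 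In other words, the additional face to collapse is a \emph{side} facet $\Qn$, not the other end, and the secondary induction runs on the relative statement rather than on the proposition itself. Without these two steps the argument does not close.
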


\begin{proof}
We abbreviate $X(Q,\v)$ as $X$ and denote by $X_\epsilon$ ($\epsilon=+$ or $-$) the inverse image of $Q_\epsilon$ by the quotient map $\q\colon X\to Q$. Since $Q_\epsilon$ is face $p$-acyclic, has the same face poset as $\Delta^{n-1}$ and $\m(Q_\epsilon)$ is coprime to $p$ by assumption, we have
\begin{equation} \label{eq:5-8}
H^{odd}(X_\epsilon;\Z/p)=0
\end{equation}
by Proposition~\ref{prop:5-2}. 

By Lemma~\ref{lemm:5-1} there is a vertex $q$ of $Q$ such that $\d_Q(q)$ is coprime to $p$. Without loss of generality we may assume $q=\qn$, i.e. $\d_Q(\qn)$ is coprime to $p$. Since we have \eqref{eq:5-8} and the exact sequence 
\begin{equation*} 
\to H^{odd}(X/X_+;\Z/p)\to H^{odd}(X;\Z/p)\to H^{odd}(X_+;\Z/p)\to,
\end{equation*}
it suffices to prove 
\begin{equation} \label{eq:5-9}
H^{odd}(X/X_+;\Z/p)=0. 
\end{equation}

We consider two cases. 

\smallskip
{\it Case a.} The case where $\det(\v_1,\dots,\v_n)\not=0$. In this case, the characteristic function $\v$ on $Q$ induces a characteristic function on $Q/Q_+$, denoted $\v^+$, and $X/X_+=X(Q/Q_+,\v^+)$. We note that $Q/Q_+$ is face $p$-acyclic and has the same face poset as $\Delta^n$ \red{since $Q$ is face $p$-acyclic and has the same poset as $\Delta^{n-1}\times[-1,1]$}. Moreover, since $\qn$ is a vertex of $Q/Q_+$ and $\d_{Q/Q_+}(\qn)=\d_Q(\qn)$ is coprime to $p$, $\m(Q/Q_+)$ is coprime to $p$. Therefore, \eqref{eq:5-9} follows from Proposition~\ref{prop:5-2}. 

\smallskip
{\it Case b.} The case where $\det(\v_1,\dots,\v_n)=0$. 

\medskip
\noindent
{\bf Claim.} There is a vertex $q$ of $\Qn$ such that $\d_{\Qn}(q)$ is coprime to $p$, so $\m(\Qn)$ is coprime to $p$. 

\smallskip
\noindent
{\it Proof.} Write $\v_i=(\v_i^1,\dots,\v_i^n)^T$ and $\v_-=(\v_-^1,\dots,\v_-^n)^T$. Since $\v_n$ is primitive, we may assume $\v_n=(0,\dots,0,1)^T$ by Theorem~\ref{theo:4-1}. Denote by $\bv_i$ and $\bv_-$ the projection images of $\v_i$ and $\v_-$ on $\Z^n/\langle \v_n\rangle$ and by $\v'_i$ and $\v'_-$ the primitive vectors which have the same directions as $\bv_i$ and $\bv_-$ respectively. Then 
\[
\d_{\Qn}(q_i^-)=|\det(\v'_1,\dots,\widehat{\v'_i},\dots,\v'_{n-1},\v'_-)| 
\]
by definition and hence 
\begin{equation} \label{eq:5-9-1}
\d_{\Qn}(q_i^-)\big|\det(\bv_1,\dots,\widehat{\bv_i},\dots,\bv_{n-1},\bv_-).
\end{equation} 
On the other hand, since $\v_n=(0,\dots,0,1)^T$, we have
\[
\det(\v_1,\dots,\v_n)=\det(\bv_1,\dots,\bv_{n-1})
\]
and the left hand side above is zero by assumption. It follows that 
\red{\[
\begin{split}
\d_Q(\qn)=&|\det(\v_1,\dots,\v_{n-1},\v_-)|\\
=&|\v_-^n\det(\bv_1,\dots,\bv_{n-1})+\sum_{j=1}^{n-1}\v_{j}^{n}(-1)^{n-j}\det(\bv_1,\dots,\widehat{\bv_{j}},\dots,\bv_{n-1},\bv_-)|\\
=&|\sum_{j=1}^{n-1}\v_{j}^{n}(-1)^{n-j}\det(\bv_1,\dots,\widehat{\bv_{j}},\dots,\bv_{n-1},\bv_-)|
\end{split}
\]}
where the second identity above is the expansion of $\det(\v_1,\dots,\v_{n-1},\v_-)$ with respect to the $n$th row. 
By \eqref{eq:5-9-1} $\gcd(\d_{\Qn}(q_1^-),\dots,\d_{\Qn}(q_{n-1}^-))$ divides the last term above. Since $\d_Q(\qn)$ is coprime to $p$, this means that $\d_{\Qn}(q_i^-)$ is coprime to $p$ for some $i$, proving the claim. 

\medskip
Now we shall prove \eqref{eq:5-9} by induction on the dimension $n$ of $Q$. When $n=1$, $Q$ is a closed interval, $X$ is $S^2$ and $X_+$ is a point; so \eqref{eq:5-9} holds in this case. We assume $n\ge 2$ in the following. Let $\Xn$ be the inverse image of $\Qn$ by the quotient map $\q\colon X\to Q$. The face poset of $\Qn$ is the same as that of $\Delta^{n-2}\times [-1,1]$ and $\Qn$ is face $p$-acyclic. The facets corresponding to $\Delta^{n-2}\times \{\pm 1\}$ are $\Qn\cap Q_\pm$ and $\m(\Qn\cap Q_\pm)$ are coprime to $p$ by \eqref{eq:5-1} because $\m(Q_\pm)$ are coprime to $p$ by assumption. \red{Moreover,  $\m(\Qn)$ is also coprime to $p$ by the claim above}. Therefore
\begin{equation} \label{eq:5-10}
H^{odd}(\Xn/(\Xn\cap X_+);\Z/p)=0
\end{equation}
by the induction assumption. 

The quotient $Q/(\Qn\cup Q_+)=:\bQ$ is face $p$-acyclic and $\bQ$ has the same face poset as \red{$\lozenge^n$}. The characteristic function $\v$ on $Q$ induces a characteristic function on $\bQ$, denoted $\tv$, because $\qn$ is a vertex of $\bQ$ and $\d_{\bQ}(\qn)=\d_Q(\qn)$ is coprime to $p$, in particular nonzero. The quotient space $\Xn/(\Xn\cap X_+)$ is a subspace of $X/X_+$ and 
\begin{equation} \label{eq:5-11}
(X/X_+)\Big/\big(\Xn/(\Xn\cap X_+)\big)=X(\bQ,\tv).
\end{equation}
Since $\d_{\bQ}(\qn)=\m(\bQ)$ is coprime to $p$, $H^{odd}(X(\bQ,\tv);\Z/p)=0$ by Proposition~\ref{prop:5-1}. This together with \eqref{eq:5-11}, \eqref{eq:5-10} and the exact sequence 
\[
\begin{split}
&\to H^{odd}((X/X_+)\Big/\big(\Xn/(\Xn\cap X_+)\big);\Z/p)\to H^{odd}(X/X_+;\Z/p)\\
&\to H^{odd}(\Xn/(\Xn\cap X_+);\Z/p)\to
\end{split}
\]
implies \eqref{eq:5-9}.
\end{proof}

\red{Now it remains to prove Lemma~\ref{lemm:5-1}.}

\begin{proof}[Proof of Lemma~\ref{lemm:5-1}]
We may assume that $\m(Q_+)$ is coprime to $p$. We may also assume that $v_+=(0,\dots,0,1)^T$ by Theorem~\ref{theo:4-1} through some identification of $N$ with $\Z^n$. Suppose that 
\begin{equation} \label{eq:5-2}
\text{$p\big|\d_Q(q)$ for all vertices $q$ of $Q$} 
\end{equation}
and we will deduce a contradiction in the following. 

By Lemma~\ref{lemm:4-2}, $\det(v_1,\dots,v_n)$ is divisible by $\gcd(\d_Q(q_1^\epsilon),\dots,\d_Q(q_n^\epsilon))$, so it follows from \eqref{eq:5-2} that 
\begin{equation} \label{eq:5-3}
p\big|\det(\v_1,\dots,\v_n).
\end{equation} 
We write $v_i=(v_i^1,\dots,v_i^n)^T\in \Z^n$ for $i=1,2,\dots,n$. 

\medskip
\noindent
{\bf Claim 1.} There is an $i\in [n]$ such that $p\big|\v_i^j$ for all $j\not=n$. 

\smallskip
\noindent
{\it Proof.} 
Since $v_+=(0,\dots,0,1)^T$, we naturally identify the quotient lattice $\Z^n/\langle v_+\rangle$ with $\Z^{n-1}$ and then the projection image $\bv_i$ of $v_i$ on the quotient lattice $\Z^{n-1}$ is $(v_i^1,\dots,v_i^{n-1})$. Set $s_i=\gcd(v_i^1,\dots,v_i^{n-1})$. Then $\bv_i/s_i=:v_i'$ is primitive. Since $\d_Q(q)$ is assumed to be divisible by $p$ for all vertices $q$ of $Q$, we have 
\begin{equation} \label{eq:5-4}
p\big| \det(\v_{i_1},\dots,\v_{i_{n-1}},\v_+) \quad\text{for every subset $\{i_1,\dots,i_{n-1}\}$ of $[n]$}.
\end{equation} 
Here, since $\v_+=(0,\dots,0,1)^T$, we have 
\begin{equation} \label{eq:5-5}
\det(\v_{i_1},\dots,\v_{i_{n-1}},\v_+)=\det(\bv_{i_1},\dots,\bv_{i_{n-1}})=(\prod_{k=1}^{n-1}s_{i_k})\det(\v_{i_1}',\dots,\v_{i_{n-1}}').
\end{equation}

Now suppose that $s_i$ is not divisible by $p$ for any $i$. Then it follows from \eqref{eq:5-4} and \eqref{eq:5-5} that $p\big|\det(\v_{i_1}',\dots,\v_{i_{n-1}}')$ for every subset $\{i_1,\dots,i_{n-1}\}$ of $[n]$. Since $\m(Q_+)$ agrees with the greatest common divisor of all $\det(\v_{i_1}',\dots,\v_{i_{n-1}}')$ by \eqref{eq:4-1}, this shows that $p\big|\m(Q_+)$ which contradicts the assumption that $\m(Q_+)$ is coprime to $p$. Therefore $p\big|s_i$ for some $i$, proving the claim. 

\medskip
\noindent
{\bf Claim 2.} $p\big|\det(\v_{i_1},\dots,\v_{i_{n-2}},\v_-,\v_+)$ for every subset $\{i_1,\dots,i_{n-2}\}$ of $[n]$.

\smallskip
\noindent
{\it Proof.} Since $\v_+=(0,\dots,0,1)^T$, we have 
\begin{equation} \label{eq:5-6}
\det(\v_{i_1},\dots,\v_{i_{n-2}},\v_-,\v_+)=\det(\bv_{i_1},\dots,\bv_{i_{n-2}},\bv_-)
\end{equation}
where $\bv_-=(\v_-^1,\dots,\v_-^{n-1})^T$ is the projection image of $\v_-$ on the quotient $\Z^n/\langle \v_+\rangle=\Z^{n-1}$. We shall observe that the right hand side in \eqref{eq:5-6} is divisible by $p$. Without loss of generality we may assume that the $i$ in Claim 1 is $n$, so that $p\big|\v_n^j$ for all $j\not=n$. We consider two cases. 

{\it Case a.} The case where $n\in \{i_1,\dots,i_{n-2}\}$. Since $\bv_n=(\v_n^1,\dots,\v_n^{n-1})^T$ and $p\big|\v_n^j$ for all $j\not=n$, the right hand side in \eqref{eq:5-6} is divisible by $p$.

{\it Case b.} The case where $n\notin \{i_1,\dots,i_{n-2}\}$. In this case, we consider the expansion of $\det(\v_{i_1},\dots,\v_{i_{n-2}},\v_-,\v_n)$ with respect to the last column. Since $\v_n=(\v_n^1,\dots,\v_n^{n})^T$ and $p\big|\v_n^j$ for all $j\not=n$, we have
\red{ 
\begin{equation} \label{eq:5-7}
|\det(\v_{i_1},\dots,\v_{i_{n-2}},\v_-,\v_n)|\equiv |\v_n^n\det(\bv_{i_1},\dots,\bv_{i_{n-2}},\bv_-)| \pmod{p}.
\end{equation}}
Here the left hand side above is $\d_Q(q)$ for $q=(\bigcap_{k=1}^{n-2}Q_{i_k})\cap Q_-\cap Q_n$, so it is divisible by $p$ by \eqref{eq:5-2}. Moreover, $\v_n^n$ is not divisible by $p$ because otherwise every entry of $\v_n$ is divisible by $p$ and this contradicts $\v_n$ being primitive. It follows from \eqref{eq:5-7} that the right hand side in \eqref{eq:5-6} is divisible by $p$ in this case, too. 

This completes the proof of the claim. 

\medskip

Now \eqref{eq:5-2}, \eqref{eq:5-3} and Claim 2 show that all $n\times n$ minors of $(\v_1,\dots,\v_n,\v_-,\v_+)$ are divisible by $p$ and hence $p\big|\m(Q)(=|N/\hN|)$ by Theorem~\ref{theo:4-1}. This contradicts the assumption that $\m(Q)$ is coprime to $p$, proving the lemma. 
\end{proof} 

\section{Example} \label{sect:6}

In this section we shall give an example of a compact simplicial toric variety showing that the converse of Proposition \ref{prop:3-1} does not hold in general.

Let $Q$ be the 3-dimensional simple polytope with the $7$ facets $Q_+,~ Q_-,$ $Q_1,..., Q_5$, where $Q_4$ and $Q_5$ are triangles obtained by cutting two vertices of a prism, shown in Figure \ref{fig:6-1} below. The polytope $Q$ can be obtained from \red{$\lozenge^3$} by performing a vertex cut four times. 

\begin{figure}[htbp]
\begin{center}
\begin{tikzpicture}
\draw[thick](-2,2)--(-0.6666,1.3333)--(0.6666,1.3333)--(2,2)--cycle;
\draw[thick](-2,-1)--(-0.6666,-1.6)--(0,-0.7)--(0,.4)--(-0.6666,1.3333)--(-2,2)--cycle;
\draw[thick](0,.4)--(0,-0.7)--(0.5,-1.6)--(2,-1)--(2,2)--(0.6666,1.3333)--cycle;
\draw[thick,dashed](-2,-1)--(2,-1);
\draw[thick](0.5,-1.6)--(-0.6666,-1.6)--(0,-0.7)--cycle;
\fill (-0.6666,1.3333) circle(3pt) ;
\fill (0.6666,1.3333) circle(3pt) ;
\fill (0,.4)circle(3pt) ; 
\fill (0,-0.7)circle(3pt) ;
\fill (0.5,-1.6)circle(3pt) ;
\fill (-0.6666,-1.6)circle(3pt) ;
\fill (2,-1) circle(3pt);
\fill (-2,-1) circle(3pt);
\fill (-2,2) circle(3pt);
\fill (2,2) circle(3pt);
\coordinate [label=left:{$Q_+$}] ($Q_+$) at (0.3,1.65);
\coordinate [label=left:{$Q_4$}] ($Q_4$) at (0.3,0.95);
\coordinate [label=left:{$Q_5$}] ($Q_5$) at (0.3,-1.25);
\coordinate [label=left:{$Q_2$}] ($Q_2$) at (-0.75,0.2);
\coordinate [label=left:{$Q_1$}] ($Q_1$) at (1.55,0);
\draw[thick](3.5,1) ..controls (3,1.3) and (2.8,1.35) ..(2, 1.4);
\draw[dashed,->](2, 1.4)..controls(1.5,1.3)..(1.2,1);
\coordinate [label=right:{$Q_3$}] ($Q_3$) at (3.5,0.9);
\draw[thick](2.6,-2.4)..controls(2.0,-2.61)and(1.43,-3.)..(0.85,-1.5);
\draw[dashed,->](0.85,-1.5)..controls(0.8,-1.35)..(0.8,-1.15);
\coordinate [label=right:{$Q_-$}] ($Q_-$) at (2.6,-2.2);
\end{tikzpicture}
\caption{}\label{fig:6-1}
\end{center}
\end{figure} 

Let $d$ be a positive integer.
To the $7$ facets $Q_1,..., Q_5$, $Q_+,~Q_-$, we respectively assign the following vectors 
\begin{align*}
&v_1=(1,0,0)& &v_2=(-1,d,-d) & &v_3=(-1,-d,0)\\
&v_4=(0,1,0)& &v_5=(d,1-d,-d) & &\\
&v_+=(0,0,1)& &v_-=(1,-1,-1),& &
\end{align*}
giving a characteristic function $\v$ on $Q$. There are ten vertices in $Q$. At each vertex, there are exactly three facets meeting and the determinant of the three vectors assigned to the facets is nonzero, indeed their absolute values are as follows:
\[
\begin{split}
&|\det(\v_1, \v_4,\v_+)|=|\det(\v_2,\v_4,\v_+)|=|\det(\v_1,\v_5,\v_-)|=1\\
&|\det(\v_1,\v_2,\v_4)|=|\det(\v_1,\v_3,\v_+)|=|\det(\v_1,\v_3,\v_-)|=d\\
& |\det(\v_1,\v_2,\v_5)|=d(2d-1)\ \ \quad\quad |\det(\v_2,\v_5,\v_-)|=d+1\\
&|\det(\v_2,\v_3,\v_-)|=d(d+3) \quad\qquad|\det(\v_2,\v_3,\v_+)|=2d.
\end{split}
\]
(Precisely speaking, the vectors are regarded as column vectors here by taking transpose.) Therefore, at each vertex, the cone spanned by the three vectors is 3-dimensional and has the origin as the apex. One can also check that 
\begin{align*}
&\v_4=(\v_1+\v_2+d\v_+)/d& &\v_5=\big((d+1)\v_1+\v_2+d(2d-1)\v_-\big)/2d\\
&v_+=-(2\v_1+\v_2+\v_3)/d & &v_-=((d+3)\v_1+\v_2+2\v_3)/d. 
\end{align*}
Since $d$ is a positive integer, this shows that $-\v_+$ is in the cone
$\angle \v_1\v_2\v_3$ and $\v_4$ is in the cone $\angle \v_1\v_2\v_+$
while $\v_-$ is in the cone $\angle \v_1\v_2\v_3$ and $\v_5$ is in the
cone $\angle \v_1\v_2\v_-$ (see Figure 2), where $\angle uvw$ denotes the cone spanned by vectors $u,v,w$. This implies that the ten 3-dimensional cones have no overlap and cover the entire $\R^3$, giving a complete simplicial fan so that the quotient space $X=X(Q,v)$ is homeomorphic to a compact simplicial toric variety. 

\red{
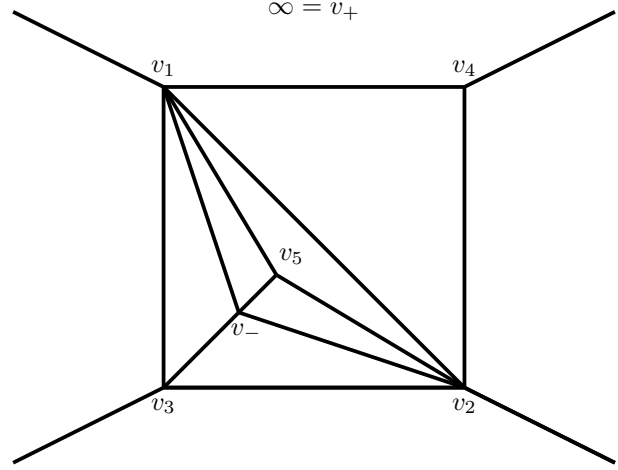
\begin{figure}[htbp]
\begin{center}
\begin{tikzpicture}[scale=1,line width=0.5mm]
\draw (0,0)--(2,1)--(6,1)--(8,0);
\draw (2,1)--(2,5)--(0,6);
\draw (2,5)--(6,5)--(8,6);
\draw (6,5)--(6,1)--(8,0);
\draw (6,1)--(2,1);
\draw (2,1)--(3.5,2.5);
\draw (6,1)--(3,2);
\draw (3,2)--(2,5);
\draw (2,5)--(6,1);
\draw (3.5,2.5)--(2,5);
\draw (3.5,2.5)--(6,1); 
\draw (2,1) node[below]{$v_3$};
\draw (6,1) node[below]{$v_2$};
\draw (3.1,2) node[below]{$v_-$};
\draw (3.7,2.5) node[above]{$v_5$};
\draw (2,5) node[above]{$v_1$};
\draw (6,5) node[above]{$v_4$};
\draw (4,6) node {$\infty=v_+$};
\end{tikzpicture}
\caption{ \red{Each vector $v_i$ is denoted by a point in $\R^2\cup\{\infty\}$ and a segment connecting $v_i,v_j$ corresponds to the 2-dimensional cone spanned by them and a triangle formed by $v_i,v_j,v_k$ corresponds to the 3-dimensional cone spanned by them.} }
\label{fig:6-2}
\end{center}
\end{figure}}

We shall check that $\mu(Q_I)=1$ for each face $Q_I$ of $Q$, where $\mu(Q_I)$ is defined in Section~\ref{sect:3}. 
As remarked in Section~\ref{sect:3}, $\m(Q_I)=1$ when $|I|=2$ or $3$. Clearly $\hN=N(=\Z^3)$. Therefore it suffices to check $\m(Q_I)=1$ when $|I|=1$. At vertices $Q_1\cap Q_4\cap Q_+$, $Q_2\cap Q_4\cap Q_+$ and $Q_1\cap Q_5\cap Q_-$, we have 
\[
|\det(\v_1, \v_4,\v_+)|=|\det(\v_2,\v_4,\v_+)|=|\det(\v_1,\v_5,\v_-)|=1
\]
and hence $\m(Q_I)=1$ for every $I$ with $|I|=1$ except $I=\{3\}$ again by the remark in Section~\ref{sect:3}. In order to see $\m(Q_3)=1$, we note that $\{\v_3, \v_4, \v_+\}$ is a base of $N$ and 
\[
\v_1=-v_3-d\v_4,\quad \v_2=\v_3+2d\v_4-d\v_+.
\]
Therefore, the images of $\v_1$ and $\v_2$ by the quotient map $\pi_{\{3\}}\colon N\to N(\{3\})=N/\langle \v_3\rangle$ are $(-d,0)$ and $(2d,-d)$ with respect to the base $\{\pi_{\{3\}}(\v_4), \pi_{\{3\}}(\v_+)\}$. Thus the corresponding primitive vectors are $(-1,0)$ and $(2,-1)$ which form a base of $N(\{3\})$. Hence $\mu(Q_3)=1$. 

We shall compute $H^3(X)$. Take a plane in $\R^3$ which meets the
facets $Q_1,Q_2,Q_3$ transversally and does not meet the other facets of
$Q$. Cutting $Q$ along the plane, we divide $Q$ into two polytopes, denoted $P_+$ and $P_-$ containing $Q_+$ and $Q_-$ respectively. Let $\pi\colon X\to Q$ be the quotient map and set 
\[
Y_\epsilon:=\pi^{-1}(P_\epsilon) \ \ \text{for $\epsilon=\pm$},\quad Y:=Y_+\cap Y_-, \quad P:=P_+\cap P_-.
\] 
The quotient space $P_\epsilon/P$ can be regarded as a prism. The characteristic function $\v$ on $Q$ induces a characteristic function on $P_\epsilon/P$, denoted $w_\epsilon$, and $X/Y_+=Y_-/Y$ (resp. $X/Y_-=Y_+/Y$) is homeomorphic to $X(P_-/P,w_-)$ (resp. $X(P_+/P,w_+)$). The same argument as above shows that $\mu$ takes $1$ on all faces of the prism $P_\epsilon/P$, so 
\begin{equation} \label{eq:6-1}
\text{$H^{*}(X, Y_\epsilon)$ and $H^{*}(Y_\epsilon,Y)$ are torsion free and vanish in odd degrees}
\end{equation}
by Proposition~\ref{prop:5-4}. 

Let $\tilde Q$ be a nice manifold with corners obtained from $Q$ by
collapsing $Q_4\cup Q_+$ and $Q_5\cup Q_-$ to a point respectively. The $\tilde Q$ has three facets coming from $Q_1,Q_2,Q_3$ and the characteristic function $\v$ on $Q$ induces a characteristic function $\tv$ on $\tilde Q$. Since 
\[
v_1=(1,0,0),\quad v_2=(-1,d,-d),\quad v_3=(-1,-d,0),
\]
one can see that $H^4(X(\tilde Q,\tv))\cong \Z/d$ by Corollary~\ref{coro:2-3}, and since $X(\tilde Q,\tv)$ is homeomorphic to the suspension of $Y$, we obtain 
\begin{equation} \label{eq:6-2}
H^3(Y)\cong \Z/d.
\end{equation}

Now, consider the exact sequence in cohomology for the pair $(Y_+, Y)$:
\begin{equation}\label{eq:6-3}
\to H^3(Y_+,Y)\to H^3(Y_+)\to H^3(Y)\to H^4(Y_+,Y)\to.
\end{equation}
Since $H^3(Y_+,Y)=0$ and $H^4(Y_+,Y)$ is torsion free by \eqref{eq:6-1}
and $H^3(Y)$ is a torsion group by \eqref{eq:6-2}, it follows from the exact sequence \eqref{eq:6-3} that 
\begin{equation} \label{eq:6-4}
H^3(Y_+)\cong H^3(Y)\cong \Z/d.
\end{equation}
Next, consider the exact sequence in cohomology for the pair $(X,Y_+)$: 
\begin{equation}\label{eq:6-5}
\to H^3(X,Y_+)\to H^3(X)\to H^3(Y_+)\to H^4(X,Y_+)\to.
\end{equation}
Similarly to the above argument, $H^3(X,Y_+)=0$ and $H^4(X,Y_+)$ is
torsion free by \eqref{eq:6-1} and $H^3(Y_+)$ is a torsion group by \eqref{eq:6-4}, so it follows from the exact sequence \eqref{eq:6-5} that 
\[
H^3(X)\cong H^3(Y_+)\cong \Z/d.
\]
Thus $X=X(Q,\v)$ is the desired example when $d\ge 2$.

\section*{Appendix}

In this appendix, we observe that when $X$ is a compact simplicial toric variety of complex dimension $n$, a result of Fischli \cite{fisc92} or Jordan \cite{jord98} implies that $H^{2n-1}(X)\cong N/\hN$ and $\Tor H^{2n-2}(X)\cong \wedge^2N/(\hN\wedge N)$, where $\Tor H^{2n-2}(X)$ denotes the torsion part of $H^{2n-2}(X)$. This result agrees with Proposition~\ref{prop:2-1} since $Q$ is contractible in this case. 

Let $\Delta$ be a simplicial complete fan of dimension $n$ and let $X$ be the associated compact simplicial toric variety. Let $M$ be the free abelian group dual to $N$. \red{Since $N=\Hom(S^1,T)$, $M$ can be thought of as $\Hom(T,S^1)$.} According to \cite[Theorem 2.3]{fisc92} or \cite[Theorem 2.5.5]{jord98}, 
\[
H^{2n-1}(X)\cong \coker \delta_1,\quad \Tor H^{2n-2}(X)\cong \coker\delta_2,
\]
where 
\begin{equation} \label{eq:a-0}
\delta_r\colon \bigoplus_{\tau\in\Delta^{(1)}}\wedge^{n-r}(\tau^{\perp}\cap M)\to \wedge^{n-r}M \qquad(r=1,2)
\end{equation}
is the sum of inclusion maps \red{with signs}, $\Delta^{(1)}$ denotes the set of one-dimensional cones in $\Delta$ and $\tau^\perp$ denotes the subspace of $M\otimes\R$ which vanish on $\tau$. 

We shall interpret the above in terms of $N$. Let $\sigma$ be a cone of dimension $n-k$ in $\Delta$. Then we have 
\begin{equation} \label{eq:a-1}
\begin{split}
\wedge^\ell(\sigma^\perp\cap M)&\cong \Hom(\wedge^{k-\ell}(\sigma^\perp\cap M),\Z)\quad \text{($\because \rank \sigma^\perp\cap M=k$)}\\
&\cong \wedge^{k-\ell}(N/N_\sigma)\qquad \text{($\because$ $N/N_\sigma$ is dual to $\sigma^\perp\cap M$)}\\
&\cong (\wedge^{n-k}N_\sigma)\wedge(\wedge^{k-\ell}N)
\end{split}
\end{equation}
where $N_\sigma$ is the intersection of $N$ with the subspace of $N\otimes\R$ spanned by $\sigma$. 
The last isomorphism above is given as follows. Choose a base $\rho_1,\dots,\rho_{n-k}$ of $N_\sigma$. Since $N_\sigma$ is of rank $n-k$, $\wedge^{n-k}N_\sigma$ is a free abelian group of rank one and $\rho_1\wedge\dots\wedge\rho_{n-k}$ is its generator. For $w\in N$, we denote by $[w]$ the element of $N/N_\sigma$ determined by $w$. Then the following correspondence 
\[
[w_1]\wedge\dots\wedge[w_{k-\ell}]\to \rho_1\wedge\dots\wedge\rho_{n-k}\wedge w_1\wedge\dots\wedge w_{k-\ell}
\]
is well defined and gives the desired isomorphism from $\wedge^{k-\ell}(N/N_\sigma)$ to $(\wedge^{n-k}N_\sigma)\wedge(\wedge^{k-\ell}N)$. This isomorphism is independent of the choice of the base $\rho_1,\dots,\rho_{n-k}$ up to sign. Namely, the isomorphism \eqref{eq:a-1} depends only on the choice of orientations on $M$ (or $N$) and $\sigma$. 

Applying \eqref{eq:a-1} to $\sigma=\tau\in \Delta^{(1)}$ and $\sigma=0$, we obtain 
\begin{alignat*}{2}
&\wedge^{n-1}(\tau^{\perp}\cap M)\cong N_\tau,&\qquad &\wedge^{n-1}M\cong N,\\
&\wedge^{n-2}(\tau^{\perp}\cap M)\cong N_\tau\wedge N,&\qquad &\wedge^{n-2}M\cong \wedge^2 N. 
\end{alignat*}
Since $\delta_r$ is the sum of inclusion maps \red{with signs}, 
the image of $\delta_1$ (resp. $\delta_2$) in \eqref{eq:a-0} can be identified with $\hat N$ (resp. $\hN\wedge N$) and hence
\[
H^{2n-1}(X)\cong E_2^{n,n-1}\cong N/\hN,\quad
\Tor H^{2n-2}(X)\cong E_2^{n,n-2}\cong \wedge^2N/(\hat N\wedge N).
\]

\bigskip
\noindent
{\bf Acknowledgment.} 
We thank Tony Bahri, Soumen Sarkar and Jongbaek Song for their interest and useful comments on the paper. We also thank Matthias Franz for his comments and for his development of the Maple package torhom which was very useful in our research. 
\red{Finally we thank the anonymous referee for helpful comments to improve the presentation of the paper.}

\end{document}